\newcommand{\eps}{\varepsilon}
\newcommand{\vr}{\varepsilon}
\title{A Parameter--Uniform Finite Difference Method for Multiscale Singularly
Perturbed Linear Dynamical Systems}
\author{S.~Valarmathi\thanks{Department of Mathematics, Bishop Heber College(Autonomous),
Tiruchirappalli-620 017, Tamil Nadu, India ({\tt
valarmathi07@gmail.com}).}
        \and John J.H.~Miller\thanks{Institute for Numerical
Computation and Analysis, Dublin 2, Ireland
(\texttt{jm@incaireland.org}).}}
\begin{document}

\maketitle

\begin{abstract}
A system of singularly perturbed ordinary differential equations
of first order with given initial conditions is considered.  The
leading term of each equation is multiplied by a small positive
parameter. These parameters are assumed to be distinct and they
determine the different scales in the solution to this problem. A
Shishkin piecewise--uniform mesh is constructed, which is used, in
conjunction with a classical finite difference discretization, to
form a new numerical method for solving this problem. It is proved
that the numerical approximations obtained from this method are
essentially first order convergent uniformly in all of the
parameters. %Numerical results are presented in support of the
%theory.
\end{abstract}

\begin{keywords}
linear dynamical system, multiscale, initial value problem,
singularly perturbed, finite difference method, parameter--uniform
convergence
\end{keywords}

\begin{AMS}
65L05, 65L12, 65L20, 65L70
\end{AMS}

\pagestyle{myheadings} \thispagestyle{plain}
\markboth{S.~Valarmathi and John J.H.~Miller}{Parameter--Uniform
Finite Difference Method}

\section{Introduction}
We consider the  initial value problem for the singularly perturbed
system of linear first order differential equations
\begin{eqnarray}\label{IVP}
E\vec{u}^{\prime}(t)+A(t)\vec{u}(t)=\vec{f}(t), \;\; t \in (0,T],
\;\; \vec{u}(0)\;\; \mathrm{given}.
\end{eqnarray}
Here $\vec{u}$ is a column $n$-vector, $E$ and $A(t)$ are $n \times
n$ matrices, $E=diag(\vec \eps)$, $\vec \eps= (\eps_1, \; \dots, \;
\eps_n) $ with
 $0 < \eps_i \leq 1$ for all $i=1 \dots n$.
For convenience we assume the ordering
\[\eps_1 < \dots < \eps_n.\]
These $n$ distinct parameters determine the $n$ distinct scales in
this multiscale problem. Cases with some of the parameters
coincident are not considered here. We write the problem in the
operator form
\begin{eqnarray*}
\vec{L}\vec{u}=\vec{f},\;\; \vec{u}(0) \;\; \mathrm{ given },
\end{eqnarray*}
where the operator $\vec L$ is defined by
\[ \vec{L}=ED+A(t)\;\; \rm{and} \;\; D=\frac{d}{dt}.  \]
We assume that, for all $t \in [0,T]$, the components $a_{ij}(t)$ of $A(t)$  satisfy the inequalities \\
\begin{eqnarray}\label{a1} a_{ii}(t) > \displaystyle{\sum_{^{j\neq
i}_{j=1}}^{n}}|a_{ij}(t)| \; \; \rm{for}\;\; 1 \le i \le n, \;\;
\rm{and} \;\;a_{ij}(t) \le 0 \;\; \rm{for} \; \; i \neq
j.\end{eqnarray} We take $\alpha$ to be any number such that
\begin{eqnarray}\label{a2} 0 <\alpha < \displaystyle{\min_{^{t \in (0,1]}_{1 \leq i \leq n}}}(\sum_{j=1}^n
a_{ij}(t)).
\end{eqnarray}
We also assume that $T \ge 2\max_{i}(\eps_i)/\alpha$,
 which ensures that the solution domain contains all of the
 layers. This condition is fulfilled if, for example,  $T \ge 2/\alpha$.
We introduce the norms $\parallel \vec{V} \parallel =\max_{1 \leq k
\leq n}|V_k|$ for any n-vector $\vec{V}$, $\parallel y
\parallel =\sup_{0\leq t\leq T}|y(t)|$ for any
scalar-valued function $y$ and $\parallel \vec{y}
\parallel=\max_{1 \leq k \leq n}\parallel y_{k}
\parallel$ for any vector-valued function $\vec{y}$.
Throughout the paper $C$ denotes a generic positive constant,
which is independent of $t$ and of all singular perturbation and
discretization parameters. Furthermore, inequalities between vectors are understood in the componentwise sense.\\

The plan of the paper is as follows. In the next section both
standard and novel bounds on the smooth and singular components of
the exact solution are obtained. The sharp estimates in Lemma 2.4
are proved by mathematical induction, while an interesting
ordering of the points $t_{i,j}$ is established in Lemma 2.6.
%which was first stated in Valarmathi et al. \cite{BAIL2008}.
In Section 3 the appropriate piecewise-uniform Shishkin meshes are
introduced, the discrete problem is defined and the discrete
maximum principle and discrete stability properties are
established. In Section 4 an expression for the local truncation
error is found and two distinct standard estimates are stated. In
the final section parameter-uniform estimates for the local
truncation error of the smooth and singular components are
obtained in a sequence of lemmas. The section culminates with
the statement and proof of the parameter-uniform error estimate, which is the main result of the paper.\\

The initial value problems considered here arise in many areas of applied mathematics; see for example \cite{Ath}. Parameter uniform numerical methods for simpler
problems of this kind, when all the singular perturbation parameters are equal, were considered in \cite{HV}. A special case of the present problem with $n=3$ was
considered in \cite{BAIL2008}, which also contains numerical results confirming the theory. For this reason further numerical validation is considered to be
unnecessary. A general introduction to parameter uniform numerical methods is given in \cite{FHMORS} and \cite{MORS}.

\section{Analytical results}
The operator $\vec{L}$ satisfies the following maximum principle
\begin{lemma}\label{max} Let $A(t)$ satisfy (\ref{a1}) and (\ref{a2}).
Let $\vec \psi(t)$ be any function in the domain of $\vec{L}$ such
that $\vec\psi(0)\ge 0.$ Then $\vec{L}\vec{\psi}(t) \geq 0$ for
all $t \in (0,T]$ implies that $\vec{\psi}(t) \geq 0$ for all $t
\in [0,T]$.
\end{lemma}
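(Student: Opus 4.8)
The standard approach for such vector maximum principles is proof by contradiction, exploiting the diagonal dominance condition (\ref{a1}). Let me sketch this.

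We assume $\vec{L}\vec\psi \geq 0$ on $(0,T]$ and $\vec\psi(0) \geq 0$, but suppose for contradiction that $\vec\psi$ is negative somewhere. Let $m$ be the component index and $t^*$ the point where the minimum of $\min_i \psi_i(t)$ over all $i$ and all $t \in [0,T]$ is achieved — i.e., $\psi_m(t^*) = \min_{i,t} \psi_i(t) < 0$. Since $\psi_m(0) \geq 0 > \psi_m(t^*)$, we have $t^* > 0$, so $t^* \in (0,T]$.

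Now I evaluate the $m$-th component of $\vec{L}\vec\psi$ at $t^*$:
$$(\vec{L}\vec\psi)_m(t^*) = \varepsilon_m \psi_m'(t^*) + \sum_{j=1}^n a_{mj}(t^*)\psi_j(t^*).$$
For the derivative term: if $t^* \in (0,T)$ it's an interior minimum so $\psi_m'(t^*) = 0$; if $t^* = T$ then $\psi_m'(t^*) \leq 0$. Either way $\varepsilon_m \psi_m'(t^*) \leq 0$. For the reaction term, I split off the diagonal: $a_{mm}(t^*)\psi_m(t^*) + \sum_{j \neq m} a_{mj}(t^*)\psi_j(t^*)$. Since $\psi_j(t^*) \geq \psi_m(t^*)$ for all $j$ and $a_{mj}(t^*) \leq 0$ for $j \neq m$, each off-diagonal term satisfies $a_{mj}(t^*)\psi_j(t^*) \leq a_{mj}(t^*)\psi_m(t^*)$. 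Hence
$$\sum_{j=1}^n a_{mj}(t^*)\psi_j(t^*) \leq \psi_m(t^*)\sum_{j=1}^n a_{mj}(t^*).$$
Because $\psi_m(t^*) < 0$ and $\sum_j a_{mj}(t^*) > 0$ (this follows from (\ref{a1}) together with $a_{mj} \leq 0$ off-diagonal, and is bounded below by $\alpha$ via (\ref{a2})), the right side is strictly negative. Therefore $(\vec{L}\vec\psi)_m(t^*) < 0$, contradicting the hypothesis $\vec{L}\vec\psi \geq 0$ on $(0,T]$.

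**Where the care is needed.** The main subtlety is the derivative term at the minimizing point: one must carefully handle the endpoint case $t^* = T$ (one-sided derivative $\leq 0$ suffices) versus the interior case, and confirm $t^* = 0$ is excluded by the initial condition. The other point to get right is that the off-diagonal sign condition is exactly what makes the inequality $a_{mj}(t^*)\psi_j(t^*) \leq a_{mj}(t^*)\psi_m(t^*)$ go the right way — multiplying the inequality $\psi_j(t^*) \geq \psi_m(t^*)$ by the nonpositive number $a_{mj}(t^*)$ reverses it. Everything else is a direct consequence of diagonal dominance. I expect no real obstacle here; this is the classical argument adapted to the coupled system, and conditions (\ref{a1})–(\ref{a2}) are tailored precisely to make it work.
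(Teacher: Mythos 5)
Your proposal is correct and follows essentially the same argument as the paper: evaluate the $i$-th component of $\vec{L}\vec\psi$ at the global componentwise minimizer $(i^*,t^*)$, use $\vec\psi(0)\ge 0$ to exclude $t^*=0$, the one-sided derivative inequality $\psi_{i^*}'(t^*)\le 0$, and the sign conditions on $A$ together with diagonal dominance to derive $(\vec{L}\vec\psi)_{i^*}(t^*)<0$, a contradiction. Your treatment is in fact slightly more careful than the paper's (explicit handling of the endpoint $t^*=T$ and of the reversal of the inequality when multiplying by the nonpositive off-diagonal entries), but the route is identical.
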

\begin{proof}Let $i^*, t^*$ be such that $\psi_{i^*}(t^{*})=\min_{i,t}\psi_i(t)$
and assume that the lemma is false. Then $\psi_{i^*}(t^{*})<0$ .
From the hypotheses we have $t^*\neq 0$ and
$\psi^{\prime}_{i^*}(t^*)\leq 0$. Thus
\begin{eqnarray*}(\vec{L}\vec \psi(t^*))_{i^*}=\eps_{i^*}\psi^{\prime}_{i^*}(t^*)+a_{i^*,i^*}(t^*)\psi_{i^*}(t^*)+\sum_{j=1,\;j
\neq i^* }^n a_{i^*,j}(t^*)\psi_j(t^*) \\ <
\psi_i^*(t^*)\sum_{j=1\;j \neq i^* }^n a_{i^*,j}<0.
\end{eqnarray*} which contradicts the assumption and proves the
result for $\vec{L}$. \end{proof}

Let $\tilde{A}(t)$ be any principal sub-matrix of $A(t)$ and
$\vec{\tilde{L}}$ the corresponding operator. To see that any
$\vec{\tilde{L}}$ satisfies the same maximum principle as $\vec{L}$,
it suffices to observe that the elements of $\tilde{A}(t)$ satisfy
\emph{a fortiori} the same inequalities as those of $A(t)$.

We remark that the maximum principle is not necessary for the
results that follow, but it is a convenient tool in their proof.
\begin{lemma}\label{stab} Let $A(t)$ satisfy (\ref{a1}) and (\ref{a2}).
If $\vec \psi(t)$ is any function in the domain of $\vec{L}$
then\[\parallel\vec \psi(t)\parallel\le \max\left\{\parallel\vec
\psi(0)\parallel,\frac{1}{\alpha}\parallel \vec{L}\vec
\psi\parallel\right\},\qquad t\in [0,T]\]
\end{lemma}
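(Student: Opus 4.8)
The plan is to obtain this stability estimate from the maximum principle of Lemma~\ref{max} by a barrier-function argument. Let $\vec e$ denote the $n$-vector each of whose components equals $1$, and put
\[
M=\max\left\{\|\vec\psi(0)\|,\ \tfrac{1}{\alpha}\|\vec L\vec\psi\|\right\}.
\]
I would introduce the two functions $\vec\phi^{\pm}(t)=M\vec e\pm\vec\psi(t)$, each of which lies in the domain of $\vec L$ since it is a sum of functions in that domain, and then verify that both satisfy the hypotheses of Lemma~\ref{max}.

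For the initial condition, each component satisfies $\phi^{\pm}_i(0)=M\pm\psi_i(0)\ge M-|\psi_i(0)|\ge M-\|\vec\psi(0)\|\ge 0$, so $\vec\phi^{\pm}(0)\ge\vec 0$. For the differential inequality, since $D(M\vec e)=\vec 0$ one has $(\vec L(M\vec e))_i=M\sum_{j=1}^{n}a_{ij}(t)$, which by the row-sum bound~(\ref{a2}) is at least $M\alpha$; hence for $t\in(0,T]$,
\[
(\vec L\vec\phi^{\pm}(t))_i
= M\sum_{j=1}^{n}a_{ij}(t)\ \pm\ (\vec L\vec\psi(t))_i
\ \ge\ M\alpha-\|\vec L\vec\psi\|\ \ge\ 0,
\]
the final inequality holding because $M\ge\tfrac{1}{\alpha}\|\vec L\vec\psi\|$. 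Thus $\vec L\vec\phi^{\pm}(t)\ge\vec 0$ on $(0,T]$.

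Applying Lemma~\ref{max} to $\vec\phi^{+}$ and to $\vec\phi^{-}$ gives $\vec\phi^{\pm}(t)\ge\vec 0$ on $[0,T]$, that is $-M\vec e\le\vec\psi(t)\le M\vec e$, which is precisely $\|\vec\psi(t)\|\le M$ for every $t\in[0,T]$, as claimed. There is no substantial obstacle in this proof once Lemma~\ref{max} is available; the only step needing a little care is the differential inequality, which depends entirely on the uniform lower bound $\alpha$ for the row sums of $A(t)$ furnished by~(\ref{a2}), applied here on the whole interval on which the maximum principle is used.
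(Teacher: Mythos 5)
Your argument is correct and is essentially the paper's own proof: the same barrier functions $M\vec e\pm\vec\psi$ are used, the same two hypotheses of Lemma~\ref{max} are checked, and the conclusion follows identically. You have merely written out explicitly the verification that the paper dismisses as ``not hard,'' correctly identifying the row-sum lower bound from~(\ref{a2}) as the key ingredient.
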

\begin{proof}Define the two functions
\[\vec{\theta}^{\pm}(t)=\max\{||\vec{\psi}(0)||,\frac{1}{\alpha}||\vec{L}\vec{\psi}||\}\vec{e}\pm
\vec{\psi}(t), \]where $\vec{e}=(1,\;\dots \;,1)^{\prime}$ is the
unit column vector. Using the properties of $A$ it is not hard to
verify that $\vec{\theta}^{\pm}(0)\geq 0$ and
$\vec{L}\vec{\theta}^{\pm}(t)\geq 0$. It follows from Lemma
\ref{max} that $\vec{\theta}^{\pm}(t)\geq 0$ for all $t \in [0,T]$.
\end{proof}

The Shishkin decomposition of the solution $\vec u\;$ of (1) is
given by $\vec u = \vec v + \vec w\;$ where $\vec v \;$ is the
solution of $\vec{L} \vec v = \vec f\;$ on $(0,T]\;$ with $\vec v(0)
= A^{-1}(0)\vec f(0)\;$ and $\vec w \;$ is the solution of
$\vec{L}\vec w = \vec 0\;$ on $(0,T]\;$ with $\vec w(0) = \vec u(0)
- \vec v(0).\;$ Here $\vec v\; , \vec w $ are, respectively, the
smooth
and singular components of $\vec u\;$.\\
The smooth component $\vec{v}$ of $\vec u$ and its derivatives are
estimated the following lemma, which gives bounds showing the
explicit dependence on the inhomogeneous term and the initial
condition.
\begin{lemma}\label{smooth} Let $A(t)$ satisfy (\ref{a1}) and (\ref{a2}). Then there exists a constant $C$, independent of $\eps, \vec{u}(0)$ and
$\vec{f}$, such that
\[\parallel \vec{v}\parallel \leq C\parallel\vec{f}\parallel, \;
\parallel \vec{v}^{\prime}\parallel \leq
C(\parallel\vec{f}\parallel+\parallel\vec{f}^{\prime}\parallel)\]
 and, for all $1 \leq i \leq n$,  \[\parallel \eps_i v_i^{\prime\prime}\parallel \leq
C(\parallel\vec{f}\parallel+\parallel\vec{f}^{\prime}\parallel)\]
\end{lemma}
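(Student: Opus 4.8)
The plan is to derive the three bounds in sequence, using the stability estimate of Lemma \ref{stab} together with the inequality $\alpha^{-1} < \min_{i}\{\cdots\}^{-1}$ already available from (\ref{a2}). Since $\vec v$ solves $\vec L \vec v = \vec f$ with $\vec v(0) = A^{-1}(0)\vec f(0)$, Lemma \ref{stab} immediately gives $\parallel \vec v \parallel \le \max\{\parallel A^{-1}(0)\vec f(0)\parallel, \alpha^{-1}\parallel\vec f\parallel\}$. The diagonal dominance hypothesis (\ref{a1}) makes $A(0)$ an $M$-matrix (or at least invertible with a uniformly bounded inverse), so $\parallel A^{-1}(0)\vec f(0)\parallel \le C\parallel \vec f\parallel$; combining the two terms yields the first bound.

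For the bound on $\vec v'$, I would differentiate the defining equation: from $E\vec v' + A\vec v = \vec f$ we get $\vec v' = E^{-1}(\vec f - A\vec v)$, but this is not uniform in $\vec\eps$, so instead I would differentiate once more to get an equation for $\vec v'$ itself, namely $E(\vec v')' + A\vec v' = \vec f' - A'\vec v$. Applying Lemma \ref{stab} to $\vec v'$ requires controlling $\vec v'(0)$; from the original equation at $t=0$, $E\vec v'(0) = \vec f(0) - A(0)\vec v(0) = \vec f(0) - A(0)A^{-1}(0)\vec f(0) = \vec 0$, so in fact $\vec v'(0) = \vec 0$. Hence Lemma \ref{stab} gives $\parallel \vec v' \parallel \le \alpha^{-1}\parallel \vec f' - A'\vec v\parallel \le C(\parallel\vec f'\parallel + \parallel\vec f\parallel)$, using the first bound and the boundedness of $A'$ on $[0,T]$ (which should be assumed as part of the smoothness of the data; if $A'$ is not assumed bounded one would need that hypothesis, but the paper evidently intends sufficiently smooth coefficients).

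For the bound on $\eps_i v_i''$, I would go back to the componentwise form of the differentiated equation: the $i$-th component of $E(\vec v')' + A\vec v' = \vec f' - A'\vec v$ reads $\eps_i v_i'' + \sum_j a_{ij} v_j' = f_i' - \sum_j a_{ij}' v_j$. Solving for $\eps_i v_i''$ and taking absolute values, $|\eps_i v_i''| \le |f_i'| + \sum_j |a_{ij}||v_j'| + \sum_j |a_{ij}'||v_j| \le C(\parallel\vec f'\parallel + \parallel\vec f\parallel)$, again invoking the already established bounds on $\parallel\vec v\parallel$ and $\parallel\vec v'\parallel$ and the boundedness of the entries of $A$ and $A'$. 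This is purely algebraic once the first two bounds are in hand.

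The main obstacle is the very first step: obtaining a bound on $A^{-1}(0)\vec f(0)$ that is genuinely independent of all the $\eps_i$ and extracting $\vec v'(0) = \vec 0$ cleanly. This hinges on reading the hypotheses (\ref{a1})–(\ref{a2}) correctly to see that $A(t)$ is uniformly invertible with $\parallel A^{-1}(t)\parallel \le 1/\alpha$ (a standard consequence of strict diagonal dominance with the row-sum lower bound $\alpha$), which is what makes $C$ in the statement independent of $\vec\eps$. Everything after that is a routine bootstrapping argument: differentiate, check the initial value vanishes, apply Lemma \ref{stab}, and substitute prior bounds. I would also note in passing that the argument tacitly requires $\vec f \in C^1$ and $A \in C^1$ on $[0,T]$ with norms absorbed into $C$.
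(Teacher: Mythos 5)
Your proposal is correct and follows essentially the same route as the paper: the paper bounds $\vec v$ and $\vec v'$ with the barrier functions $C\|\vec f\|\vec e\pm\vec v$ and $C(\|\vec f\|+\|\vec f'\|)\vec e\pm\vec v'$ via the maximum principle (which is exactly the content of Lemma \ref{stab} that you invoke), using $\vec v(0)=A^{-1}(0)\vec f(0)$, $\vec v'(0)=0$ and $\vec L\vec v'=\vec f'-A'\vec v$, and then obtains the bound on $\eps_i v_i''$ algebraically by differentiating the $i$-th equation, just as you do. Your explicit justification of $\|A^{-1}(0)\|\le 1/\alpha$ from the M-matrix structure and of $\vec v'(0)=0$ simply fills in details the paper leaves implicit.
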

\begin{proof}We introduce the two functions
$\vec{\psi}^{\pm}(t)=C||\vec{f}||\vec{e}\pm \vec{v}(t)$ where
$\vec{e}$ is the unit column vector. Noting that
$\vec{v}(0)=A^{-1}(0)\vec{f}(0)$, it is not hard to see that
$\vec{\psi}^{\pm}(0)\geq 0$ and $\vec{L}\vec{\psi}^{\pm}(t)\geq
0$. It follows from Lemma \ref{max} that $\vec{\psi}^{\pm}(t)\geq
0$ for all $t \in [0,T]$ and so $\parallel \vec{v}\parallel \leq
C\parallel\vec{f}\parallel$. To estimate the derivative we now
define the two functions
$\vec{\phi}^{\pm}(t)=C(||\vec{f}||+||\vec{f}^{\prime}||)\vec{e}\pm
\vec{v^{\prime}}(t)$. Since $\vec{v}^{\prime}(0)= 0$ and
$\vec{L}\vec{v}^{\prime}= \vec{f}^{\prime}-A^{\prime}\vec{v}$, it
may be verified that $\vec{\phi}^{\pm}(0)\ge 0$ and
$\vec{L}\vec{\phi}^{\pm}(t)\geq 0$. Again by Lemma \ref{max} we
have $\vec{\phi}^{\pm}(t)\geq 0$, which proves the result.
Finally, differentiating the equation $\eps_i
v_i^{\prime}+(A\vec{v})_i=f_i$ and using the estimates of
$\vec{v}$ and $\vec{v}^{\prime}$, we obtain the required bound on
$\eps_i v_i^{\prime \prime}$ \end{proof}

We define the layer functions $B_{i}, 1 \leq i \leq n$, associated
with the solution $\vec u\;$ by
\[B_{i}(t) = e^{-\alpha t / \eps_i}, \; t \in [0,\infty).\]
The following elementary properties of these layer functions, for
all $1 \leq i < j \leq n$, should be
noted:\\
(i) $B_i(t) < B_j(t)$, for all  $t>0$.\\
(ii) $B_i(s) > B_i(t)$,  for all  $0 \leq s<t < \infty$.\\
(iii) $B_i(0)=1$  and  $0<B_i(t) < 1$  for all   $t>0$.\\
Bounds on the singular component $\vec{w}$ of $\vec{u}$ and its
derivatives are contained in
\begin{lemma}\label{singular} Let $A(t)$ satisfy (\ref{a1}) and
(\ref{a2}).Then there exists a constant $C,$ such that, for each
$t \in [0,T]$ and $i=1,\; \dots , \; n$,
\[\left|w_i(t)\right| \;\le\; C B_{n}(t),\;\;
\left|w_i^\prime(t)\right| \;\le\; C\sum_{q=i}^n
\frac{B_{q}(t)}{\eps_q},\;\; \left|\eps_i
w_i^{\prime\prime}(t)\right| \;\le\; C\sum_{q=1}^n
\frac{B_{q}(t)}{\eps_q}.\]
\end{lemma}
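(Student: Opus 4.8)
The plan is to establish the three bounds in sequence, obtaining each from the previous one, exactly mirroring the structure used for the smooth component in Lemma~\ref{smooth} but now with the exponential layer functions $B_i$ replacing constant barriers.

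First I would bound $\vec w$ itself. The natural barrier is $\vec\Psi^\pm(t)=C\,B_n(t)\,\vec e\pm\vec w(t)$. Since $\vec w(0)=\vec u(0)-\vec v(0)$ is a fixed vector, choosing $C$ large enough makes $\vec\Psi^\pm(0)\ge 0$ because $B_n(0)=1$. For the differential inequality, $(\vec L\vec\Psi^\pm)_i = \eps_i C B_n'(t) + C B_n(t)\sum_j a_{ij}(t)$; since $B_n'(t)=-(\alpha/\eps_n)B_n(t)$ and $\eps_i\le\eps_n$, the first term is at least $-\alpha C B_n(t)$, while (\ref{a2}) gives $\sum_j a_{ij}(t)\ge\alpha$, so $(\vec L\vec\Psi^\pm)_i\ge C B_n(t)(-\alpha+\alpha)=0$. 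By Lemma~\ref{max}, $\vec\Psi^\pm\ge 0$, giving $|w_i(t)|\le C B_n(t)$.

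Next, the first-derivative bound. Here I would argue by induction on $i$, decreasing from $i=n$ to $i=1$, which is the reason the estimate is a sum $\sum_{q=i}^n B_q/\eps_q$ rather than a single term --- the coupling through the off-diagonal entries of $A$ forces the slower layers of the lower-indexed components to feel the faster layers above them. Concretely, I would work with the scalar equation for $w_i'$ obtained by differentiating $\eps_i w_i' + (A\vec w)_i = 0$, namely $\eps_i w_i'' + a_{ii}w_i' = -\sum_{j\neq i}a_{ij}w_j' - (A'\vec w)_i$; bounding the right-hand side using the already-established bound on $\vec w$ and the inductive bounds on $w_j'$ for $j>i$, and then applying a scalar barrier-function argument with barrier proportional to $\sum_{q=i}^n B_q(t)/\eps_q$, using that $B_q'=-(\alpha/\eps_q)B_q$ so each term of the barrier is ``tuned'' to absorb one scale. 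One must also handle the initial value $w_i'(0)$, which is bounded by a constant times $\sum_q 1/\eps_q \ge \sum_{q=i}^n B_q(0)/\eps_q$ after multiplying by a suitable $C$.

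Finally, the second-derivative bound follows directly: from $\eps_i w_i'' = -(A\vec w)_i' $ wait --- rather from $\eps_i w_i'' = -a_{ii}w_i' - \sum_{j\neq i}a_{ij}w_j' - (A'\vec w)_i$, one simply inserts the bound on $\vec w$ and the bounds on all $w_j'$, $1\le j\le n$, to get $|\eps_i w_i''(t)|\le C\sum_{q=1}^n B_q(t)/\eps_q$ (the sum now runs over all $q$ because $w_i''$ involves $w_j'$ for every $j$ through the full row of $A$). The main obstacle will be the inductive first-derivative step: verifying that the chosen barrier $C\sum_{q=i}^n B_q(t)/\eps_q$ genuinely satisfies $\tilde L(\text{barrier})\ge|\text{RHS}|$ requires carefully tracking the constants across the induction and checking that the off-diagonal contributions $|a_{ij}|\,|w_j'|\le C|a_{ij}|\sum_{q=j}^n B_q/\eps_q$ for $j>i$ are dominated by the diagonal gain of the barrier, exploiting $B_j<B_i$ (property (i)) and the diagonal dominance (\ref{a1}). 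The scalar maximum principle for $\tilde L$ (noted after Lemma~\ref{max}) is what legitimizes these one-component barrier arguments.
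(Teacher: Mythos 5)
Your first step (the barrier $CB_n\vec e\pm\vec w$ for $\vec w$ itself) and your last step (reading off $\eps_i w_i''$ from the differentiated equation once $\vec w$ and $\vec w'$ are bounded) coincide with the paper's proof. The gap is in the middle step, and it is a genuine one: your downward induction on the component index $i$ does not close. The differentiated equation
$\eps_i w_i''+a_{ii}w_i'=-\sum_{j\neq i}a_{ij}w_j'-(A'\vec w)_i$
couples $w_i'$ to \emph{all} other components, including $w_j'$ for $j<i$, which in a descent from $i=n$ have not yet been estimated. This is not a bookkeeping issue that a larger constant fixes. The true size of $w_j'$ for $j<i$ is $\sim B_j(t)/\eps_j$, which near $t=0$ is of order $1/\eps_j\gg 1/\eps_i$; so the right-hand side of the scalar equation for $w_i'$ genuinely contains terms of order $B_1/\eps_1$, while the barrier you propose, $C\sum_{q=i}^nB_q/\eps_q$, is only of order $1/\eps_i$. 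The scalar operator $\eps_iD+a_{ii}$ cannot absorb such terms with that barrier: applying it to $B_q$ with $q<i$ gives $(-\alpha\eps_i/\eps_q+a_{ii})B_q$, and $\eps_i/\eps_q$ is unbounded, so the candidate comparison function fails the differential inequality precisely where it matters. (The same obstruction kills the simultaneous vector barrier with $i$th component $C\sum_{q=i}^nB_q/\eps_q$: the off-diagonal entries $a_{ij}\le 0$ for $j<i$ multiply the \emph{larger} barriers $\sum_{q=j}^nB_q/\eps_q$, producing uncompensated negative terms $\sum_{q<i}B_q/\eps_q$.) The fact that $w_i'$ nevertheless does not feel the faster layers is an integrated, not a pointwise, phenomenon — $\int \eps_1^{-1}B_1$ over the $\eps_1$-layer is $O(1)$ — and a pointwise maximum-principle argument on the differentiated equation cannot see this.

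The paper avoids the difficulty by inducting on the \emph{size of the system} rather than on the component index. The $n$th derivative bound $|w_n'|\le CB_n/\eps_n$ comes for free from the undifferentiated $n$th equation and the bound on $\vec w$. Then $\vec p=(w_1,\dots,w_{n-1})$ is viewed as the solution of the complete $(n-1)\times(n-1)$ subsystem $\vec{\tilde L}\vec p=\vec g$ with forcing $g_k=-a_{k,n}w_n$ (so the only coupling to the discarded component enters as known, benign data bounded by $CB_n$ with derivative bounded by $CB_n/\eps_n$). A Shishkin decomposition $\vec p=\vec q+\vec r$ \emph{inside the subsystem} then separates the $B_n/\eps_n$ contribution (estimated via Lemma \ref{smooth} applied to $\vec q$) from the subsystem's own layers $B_i/\eps_i,\dots,B_{n-1}/\eps_{n-1}$ (estimated by the induction hypothesis applied to $\vec r$). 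If you want to salvage your outline, you would need to replace the component-by-component descent with this subsystem argument, or find some other mechanism that decouples $w_i'$ from the faster components $w_j'$, $j<i$; as written, the key inductive step would fail.
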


\begin{proof}First we obtain the bound on $\vec{w}$. We define the two
functions $\vec{\psi}^{\pm}=CB_n\vec{e} \pm \vec{w}$. Then clearly
$\vec{\psi}^{\pm}(0) \geq 0$ and $L\vec{\psi}^{\pm}=CL(B_n\vec{e})$.
Then, for $i=1,\dots, n$, $(L\vec{\psi}^{\pm})_i
=C(\sum_{j=1}^{n}a_{i,j}-\alpha\frac{\eps_i}{\eps_n})B_n >0$. By
Lemma \ref{max} $\vec{\psi}^{\pm}\geq 0$, which leads to the
required bound on $\vec{w}$.

To establish the bound on $\vec{w}^{\prime}$ we begin with the
$n^{th}$ equation in $\vec{L}\vec{w}=0$, namely
\[ \eps_nw_n^\prime + a_{n,1}w_1 +\dots +a_{n,n}w_n=0, \] from
which the bound for $i=n$ follows. We now bound $w_i^{\prime}$ for
$1 \le i \le n-1$. We define $\vec{p}=(w_1, \dots,w_{n-1})$ and,
taking the first $n-1$ equations satisfied by $\vec{w}$, we get
\[\tilde{A}\vec{p}=\vec{g},\] where $\tilde{A}$ is the matrix
obtained from $A$ by deleting the last row and column and the
components of $\vec{g}$ are $g_k=-a_{k,n}w_n$ for $1 \le k \le n-1$.
Using the bounds already obtained for $\vec{w}$ we see that
$\vec{g}$ is bounded by $CB_n (t)$ and its derivative by $C\frac{B_n
(t)}{\eps_n}$. The initial condition for $\vec{p}$ is
$\vec{p}(0)=\vec{u}(0)-\vec{u}^0(0)$, where $\vec{u}^0$ is the
solution of the reduced problem $\vec{u}^0=A^{-1}\vec{f}$, and is
therefore bounded by
$C(\parallel\vec{u}(0)\parallel+\parallel\vec{f}(0)\parallel)$.
Decomposing $\vec{p}$ into smooth and singular components we get
\[\vec{p}=\vec{q}+\vec{r}, \;\;\ \vec{p}^{\prime}=\vec{q}^{\prime}+\vec{r}^{\prime}.\]
Applying Lemma \ref{smooth}  to $\vec{q}$, from the bounds on the
inhomogeneous term $\vec{g}$ and its derivative $\vec{g}^{\prime}$,
we conclude that $\parallel\vec{q}^{\prime}(t)\parallel \leq
C\frac{B_n(t)}{\eps_n}$. We now use mathematical induction. We
assume that Lemma \ref{singular} is valid for all systems with $n-1$
equations. Then Lemma \ref{singular} applies to $\vec{r}$ and so,
for $i=1, \dots, n-1$,
\[
|r^{\prime}_{i}(t) | \leq
C(\frac{B_{i}(t)}{\eps_i}+\dots+\frac{B_{n-1}(t)}{\eps_{n-1}}).\]
Combining the bounds for $q_i$ and $r_i$ we obtain
\[|p^{\prime}_i(t)|\leq
C(\frac{B_{i}(t)}{\eps_i}+\dots+\frac{B_{n}(t)}{\eps_n}).\]
Recalling the definition of $\vec{p}$ this is the same as
\[|w^{\prime}_i(t)|\leq
C(\frac{B_{i}(t)}{\eps_i}+\dots+\frac{B_{n}(t)}{\eps_n}).\] We have
thus proved that Lemma \ref{singular} holds for our system with $n$
equations. Since Lemma \ref{singular} is true for a system with one
equation,  we conclude by mathematical induction that it is true for
any system of $n>1$ equations.

Finally, to estimate the second derivative, we differentiate the
$i^{th}$ equation of the system $\vec{L}\vec{w}=0$ to get
\[\eps_{k}w_{i}^{\prime\prime}=-(A\vec{w}_i^{\prime} + A^{\prime}\vec{w})_i\]
and we see that the bound on $w_{i}^{\prime\prime}$ follows easily
from the bounds on $\vec{w}$ and $\vec{w}^{\prime}$. \end{proof}
\begin{definition}
For each $1 \leq i \neq j \leq n$ we define the point $t_{i,j}$ by
\begin{equation}\label{t0}\frac{B_i(t_{i,j})}{\varepsilon_i}=\frac{B_j(t_{i,j})}{\varepsilon_j}. \end{equation}
\end{definition}
In the next lemma it is shown that these points exist, are
uniquely defined and have an interesting ordering. Sufficient
conditions for them to lie in the domain $[0,T]$ are also
provided.

 \begin{lemma}\label{ts} For all $i,j$ with $1 \leq i < j \leq n$ the points
$t_{i,j}$ exist, are uniquely defined and satisfy the following
inequalities
\begin{equation}\label{t1}
\vr_i^{-1}B_{i}(t) > \vr_j^{-1}B_{j}(t)\qquad t \in [0,t_{ij})
\end{equation}
and
\begin{equation}\label{t2}
\vr_i^{-1}B_{i}(t) < \vr_j^{-1}B_{j}(t)\qquad t \in (t_{ij},\infty).
\end{equation}
In addition the following ordering holds
\begin{equation}\label{t3}t_{i,j}< t_{i+1,j}, \; \mathrm{if} \;\; i+1<j \;\; \mathrm{and} \;\; t_{i,j}<
t_{i,j+1}, \;\; \mathrm{if} \;\; i<j \end{equation} \\
and
\begin{equation}\label{t4} \eps_i \leq \eps_{j}/2 \;\; \mathrm{implies \;\; that}
\;\; t_{ij} \in (0,T] \;\; \mathrm{for \;\; all} \;\; i<j.
\end{equation}
\end{lemma}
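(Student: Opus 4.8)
The plan is to derive an explicit closed form for $t_{i,j}$ and then read off all four groups of assertions from it. Since $B_i(t)=e^{-\alpha t/\eps_i}$, taking logarithms in the defining relation (\ref{t0}) turns it into the \emph{linear} equation
\[
\ln(\eps_j/\eps_i)=\alpha t\,(\eps_i^{-1}-\eps_j^{-1})
\]
for the single unknown $t$. For $i<j$ the coefficient $\eps_i^{-1}-\eps_j^{-1}$ is strictly positive, so this equation has exactly one solution, namely
\[
t_{i,j}=\frac{\ln(\eps_j/\eps_i)}{\alpha(\eps_i^{-1}-\eps_j^{-1})}=\frac{\eps_i\eps_j\,\ln(\eps_j/\eps_i)}{\alpha(\eps_j-\eps_i)}>0,
\]
which settles existence and uniqueness.

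For the monotonicity statements (\ref{t1}) and (\ref{t2}) I would look at the scalar function $h(t):=\ln\!\big(\eps_i^{-1}B_i(t)\big)-\ln\!\big(\eps_j^{-1}B_j(t)\big)=\ln(\eps_j/\eps_i)-\alpha t\,(\eps_i^{-1}-\eps_j^{-1})$. It is affine and strictly decreasing in $t$, equals $\ln(\eps_j/\eps_i)>0$ at $t=0$, and tends to $-\infty$ as $t\to\infty$; hence $h>0$ on $[0,t_{i,j})$, $h(t_{i,j})=0$, and $h<0$ on $(t_{i,j},\infty)$. Exponentiating yields (\ref{t1}) and (\ref{t2}).

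The ordering (\ref{t3}) is the crux, and the one place where one has to do more than compute. I would rewrite the formula as $\alpha t_{i,j}=\big(\ln(1/\eps_i)-\ln(1/\eps_j)\big)/\big((1/\eps_i)-(1/\eps_j)\big)$; that is, $\alpha t_{i,j}$ is the slope $S(a,b)$ of the secant to the graph of the \emph{strictly concave} function $\ln$ through the abscissae $a=1/\eps_i$ and $b=1/\eps_j$. For a strictly concave function this secant slope is a strictly decreasing function of each abscissa separately: $\partial_b S(a,b)=\big(f'(b)-S(a,b)\big)/(b-a)<0$, which follows by writing $S(a,b)=f'(\xi)$ with $\xi$ strictly between $a$ and $b$ (mean value theorem) and using that $f'=(\ln)'$ is strictly decreasing, and similarly for $\partial_a$. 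Now increasing $i$ to $i+1$ with $i+1<j$ replaces the abscissa $1/\eps_i$ by the strictly smaller $1/\eps_{i+1}$ and leaves $1/\eps_j$ fixed, so the secant slope strictly increases and $t_{i,j}<t_{i+1,j}$; increasing $j$ to $j+1$ with $i<j$ replaces $1/\eps_j$ by the strictly smaller $1/\eps_{j+1}$ and leaves $1/\eps_i$ fixed, so again the slope increases and $t_{i,j}<t_{i,j+1}$. (Equivalently one may write $t_{i,j}=(\eps_j/\alpha)\,g(\eps_i/\eps_j)$ with $g(\rho)=-\rho\ln\rho/(1-\rho)$ and verify the two monotonicities directly.) I expect this secant-slope monotonicity to be the main obstacle; everything else is routine.

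Finally, for (\ref{t4}): if $\eps_i\le\eps_j/2$ then $\eps_j-\eps_i\ge\eps_j/2$, so the explicit formula gives $t_{i,j}\le 2\eps_i\ln(\eps_j/\eps_i)/\alpha$. Since the map $x\mapsto x\ln(\eps_j/x)$ on $(0,\eps_j]$ is maximised at $x=\eps_j/e$ with value $\eps_j/e$, we obtain $t_{i,j}\le 2\eps_j/(e\alpha)\le 2\eps_n/(e\alpha)<2\eps_n/\alpha\le T$ by the standing assumption $T\ge 2\max_i\eps_i/\alpha$. Combined with $t_{i,j}>0$ this gives $t_{i,j}\in(0,T]$, completing the proof. (In fact the elementary bound $-\rho\ln\rho/(1-\rho)<1$ for $\rho\in(0,1)$ — i.e.\ $\ln u<u-1$ for $u>1$ — shows $t_{i,j}<\eps_j/\alpha\le T/2$ unconditionally, so the hypothesis $\eps_i\le\eps_j/2$ is not strictly needed for this last point.)
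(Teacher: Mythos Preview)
Your proof is correct. The treatment of existence, uniqueness, and the inequalities (\ref{t1})--(\ref{t2}) is equivalent to the paper's (they look at the ratio $\eps_j B_i(t)/(\eps_i B_j(t))$ rather than its logarithm, but the content is identical). The genuine difference is in (\ref{t3}): the paper substitutes $\eps_k=e^{-p_k}$, writes $\alpha t_{i,j}=(p_i-p_j)/(e^{p_i}-e^{p_j})$, and after some algebra reduces the inequality $t_{i,j}<t_{i+1,j}$ to the monotonicity of $x\mapsto(e^{x}-1)/x$ on $(0,\infty)$. Your secant-slope argument---recognising $\alpha t_{i,j}$ as the slope of the chord on the graph of $\ln$ through the abscissae $1/\eps_i$ and $1/\eps_j$ and invoking the strict concavity of $\ln$---is more conceptual, handles both halves of (\ref{t3}) uniformly without further computation, and makes the source of the ordering transparent. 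For (\ref{t4}) the paper uses the cruder inequality $\ln(\eps_j/\eps_i)\le\eps_j/\eps_i$ together with $\eps_j-\eps_i\ge\eps_j/2$ to obtain $t_{i,j}\le2\eps_j/\alpha\le T$; your bound via the maximum of $x\mapsto x\ln(\eps_j/x)$ is sharper by a factor of $e$, and your parenthetical remark that $t_{i,j}<\eps_j/\alpha$ holds unconditionally (via $\ln u<u-1$) is a correct and worthwhile observation that the paper does not make.
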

\begin{proof} Existence, uniqueness, (\ref{t1}) and (\ref{t2}) all follow
from the observation that for $i<j$ we have $\eps_i<\eps_j$ and
the ratio of the two sides of (\ref{t0}), namely
\[\frac{B_{i}(t)}{\varepsilon_i}\frac{\varepsilon_j}{B_{j}(t)}=
\frac{\eps_j}{\eps_i}\exp{(-\alpha
t(\frac{1}{\eps_i}-\frac{1}{\eps_j}))},\] is monotonically
decreasing from $\frac{\varepsilon_j}{\varepsilon_i} >1$ to $0$ as
$t$ increases
from $0$ to $\infty$.\\

Rearranging (\ref{t0}) gives
\[t_{i,j}=\frac{\ln(\frac{1}{\eps_i})-\ln(\frac{1}{\eps_j})}{\alpha(\frac{1}{\eps_i}-\frac{1}{\eps_j})}
.\] Writing $\eps_k = \exp(-p_k)$ for some $p_k > 0$ and all $k$
gives
\[t_{i,j}=\frac{p_i -p_j}{\alpha(\exp{p_i} -\exp{p_j})}.\] The
inequality $t_{i,j}< t_{i+1,j}$ is equivalent to
\[\frac{p_i -p_j}{\exp{p_i} -\exp{p_j}}<\frac{p_{i+1} -p_j}{\exp{p_{i+1}} -\exp{p_j}}, \]
which can be written in the form
\[(p_{i+1}-p_j)\exp(p_i-p_j)+(p_{i}-p_{i+1})-(p_{i}-p_j)\exp(p_{i+1}-p_j)>0. \]
With $a=p_i-p_j$ and $b=p_{i+1}-p_j$ it is not hard to see that
$a>b>0$ and $a-b=p_i-p_{i+1}$. Moreover, the previous inequality is
then equivalent to \[\frac{\exp{a}-1}{a}>\frac{\exp{b}-1}{b}, \]
which is true because $a>b$ and proves the first part of
(\ref{t3}). The second part is proved by a similar argument.\\

Finally, to prove (\ref{t4}) it suffices to rearrange (\ref{t0})
in the form
\[t_{i,j}=\frac{\ln(\frac{\eps_j}{\eps_i})}{\alpha(\frac{1}{\eps_i}-\frac{1}{\eps_j})}.
\] Since $T > \frac{2}{\alpha}$ and $\eps_i \leq \frac{\eps_j}{2}$ it follows that
$\ln(\frac{\eps_j}{\eps_i}) \leq \frac{\eps_j}{\eps_i}$ and $t_{i,j}
\in (0,T]$. \end{proof}

\section{The discrete problem}
We construct a piecewise uniform mesh with $N$ mesh-intervals and
mesh-points $\{t_i\}_{i=0}^N$ by dividing the interval $[0,T]$ into
$n+1$ sub-intervals as follows
\[
[0,T]=[0,\sigma_1]\cup(\sigma_1,\sigma_2]\cup\dots(\sigma_{n-1},\sigma_n]\cup(\sigma_n,T]\]
Then, on the sub-interval $[0,\sigma_1]$, a uniform mesh with
$\frac{N}{2^n}$ mesh-intervals is placed, and similarly on
$(\sigma_i, \sigma_{i+1}], 1 \leq i \leq n-1$,  a uniform mesh with
$ \frac{N}{2^{n-i+1}}$ mesh-intervals  and  on  $(\sigma_n,T]$ a
uniform mesh with $\frac{N}{2}$
 mesh-intervals. In practice it is convenient to take $N=2^{n}k$ where $k$
 is some positive power of 2. The $n$ transition
points between the uniform meshes are defined by
\[\sigma_{i}=\min\{\frac{\sigma_{i+1}}{2},\frac{\eps_i}{\alpha}\ln
N\}\]for $i=1,\dots,n-1$ and
\[\sigma_{n}=\min\{\frac{T}{2},\frac{\eps_n}{\alpha}\ln N\}. \]
Clearly \[ 0<\sigma_1 < \dots < \sigma_n \leq \frac{T}{2}. \]This
construction leads to a class of $2^n$ piecewise uniform Shishkin
 meshes $M_{\vec{b}}$, where $\vec b$ denotes an $n$--vector with
$b_i=0$ if $\sigma_i=\frac{\sigma_{i+1}}{2}$ and $b_i=1$ otherwise.
Writing $\delta_j=t_j-t_{j-1}$ we remark that, on any $M_{\vec b}$,
we have
\begin{equation}\label{geom1}\delta_j \leq
CN^{-1}, \;\;\; 1 \leq j \leq N \end{equation}
 and
\begin{equation}\label{geom2}\sigma_i \leq C \eps_i \ln N, \; \;\; 1 \leq i \leq n. \end{equation}
On any $M_{\vec b}$ we now consider the discrete solutions defined
by the backward Euler finite difference scheme
\[ED^{-}\vec{U} +A(t)\vec{U}=\vec{f},  \qquad \vec{U}(0)=\vec{u}(0), \]
or  in operator form
\[\vec{L}^N \vec{U} =\vec{f},  \qquad  \vec{U}(0)=\vec{u}(0), \] where
\[\vec{L}^N=ED^{-}+A(t)\]
and $D^{-}$ is the backward difference operator
\[
D^-\vec{U}(t_j)= \frac {\vec{U}(t_j)-\vec{U}(t_{j-1})}{\delta_j}.
\]
We have the following discrete maximum principle analogous to the
continuous case.
\begin{lemma}\label{dmax} Let $A(t)$ satisfy (\ref{a1}) and (\ref{a2}).
Then, for any mesh function $\vec \Psi$, the inequalities $\vec
{\Psi}(0)\;\ge\;\vec 0 \;\rm{and}\;\vec{L}^N
\vec{\Psi}(t_j)\;\ge\;\vec 0\;$ for $1\;\le\;j\;\le\;N,\;$ imply
that $\;\vec \Psi(t_j)\ge \vec 0\;$ for $0\;\le\;j\;\le\;N.\;$
\end{lemma}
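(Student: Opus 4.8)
The plan is to reproduce, at the discrete level, the argument used for the continuous maximum principle in Lemma~\ref{max}, with the derivative replaced by the backward difference operator $D^-$. Suppose the conclusion fails. Among all components and all mesh points, choose $i^*$ and $j^*$ so that $\Psi_{i^*}(t_{j^*}) = \min_{i,\,0\le j\le N} \Psi_i(t_j)$, and by the assumption this minimum is negative. Since $\vec{\Psi}(0)\ge\vec 0$ we must have $j^*\ge 1$, so the backward difference $D^-\Psi_{i^*}(t_{j^*}) = \bigl(\Psi_{i^*}(t_{j^*}) - \Psi_{i^*}(t_{j^*-1})\bigr)/\delta_{j^*}$ is defined; moreover it is $\le 0$, because $\Psi_{i^*}(t_{j^*})$ is a global minimum and $\delta_{j^*}>0$. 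This is the discrete counterpart of the inequality $\psi'_{i^*}(t^*)\le 0$ used in Lemma~\ref{max}, and it is the only place where the discretization enters.

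Next I would expand $(\vec{L}^N\vec{\Psi}(t_{j^*}))_{i^*} = \eps_{i^*}D^-\Psi_{i^*}(t_{j^*}) + \sum_{j=1}^n a_{i^*,j}(t_{j^*})\Psi_j(t_{j^*})$. The first term is $\le 0$ by the previous paragraph. For the second, since $a_{i^*,j}(t_{j^*})\le 0$ for $j\neq i^*$ and $\Psi_j(t_{j^*})\ge \Psi_{i^*}(t_{j^*})$ for every $j$, we have $a_{i^*,j}(t_{j^*})\Psi_j(t_{j^*})\le a_{i^*,j}(t_{j^*})\Psi_{i^*}(t_{j^*})$, hence $\sum_{j=1}^n a_{i^*,j}(t_{j^*})\Psi_j(t_{j^*}) \le \Psi_{i^*}(t_{j^*})\sum_{j=1}^n a_{i^*,j}(t_{j^*})$. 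By (\ref{a2}) the row sum $\sum_{j=1}^n a_{i^*,j}(t_{j^*})$ is strictly positive, and $\Psi_{i^*}(t_{j^*})<0$, so this upper bound is strictly negative. Combining the two contributions gives $(\vec{L}^N\vec{\Psi}(t_{j^*}))_{i^*} < 0$, contradicting the hypothesis $\vec{L}^N\vec{\Psi}(t_j)\ge\vec 0$ for $1\le j\le N$. This contradiction establishes the result.

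Since the argument is entirely sign-based and local, I expect no genuine difficulty; the main (and essentially trivial) observation is that a backward difference is non-positive at a discrete global minimum. As in the continuous case, the same reasoning applies verbatim to the operator associated with any principal sub-matrix of $A(t)$, whose entries satisfy \emph{a fortiori} the inequalities (\ref{a1})--(\ref{a2}).
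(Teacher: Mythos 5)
Your proposal is correct and follows essentially the same argument as the paper: locate a negative global minimum of the mesh function, note the backward difference is non-positive there, and use the sign conditions (\ref{a1}) together with the positive row sums from (\ref{a2}) to force $(\vec{L}^N\vec{\Psi}(t_{j^*}))_{i^*}<0$, contradicting the hypothesis. If anything, your write-up is slightly more careful than the paper's, which states the final bound with the sum taken only over $k\neq i^*$ where the full row sum is what is actually needed.
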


\begin{proof} Let $i^*, j^*$ be such that
$V_{i^*}(t_{j^{*}})=\min_{i,j}V_i(t_j)$ and assume that the lemma
is false. Then $V_{i^*}(t_{j^{*}})<0$ . From the hypotheses we
have $j^*\neq 0$ and $V_{i^*} (t_{j^*})-V_{i^*}(t_{j^*-1})\leq 0$.
Thus
\begin{eqnarray*}(\vec{L^N}\vec{V}(t_{j^*}))_{i^*}=
\eps_{i^*}\frac{V_{i^*}(t_{j^*})-V_{i^*}(t_{j^*-1})}{\delta_{j^*}}+a_{i^*,i^*}(t_{j^*})V_{i^*}(t_{j^*})+\sum_{k=1\;k
\neq i^* }^n a_{i^*,k}(t_{j^*})V_{k}(t_{j^*}) \\ <
V_{i^*}(t_{j^*})\sum_{k=1\;k \neq i^* }^n a_{i^*,k}<0,
\end{eqnarray*} which contradicts the assumption, as required. \end{proof}

An immediate consequence of this is the following discrete stability
result.
\begin{lemma}\label{dstab} Let $A(t)$ satisfy (\ref{a1}) and (\ref{a2}).
Then, for any mesh function $\vec \Psi $,
\[\parallel\vec \Psi(t_j)\parallel\;\le\;\max\left\{\parallel\vec \Psi(0)\parallel,\frac{1}{\alpha}\parallel
\vec{L}^N\vec \Psi\parallel\right\}, 0\leq j \leq N \]
\end{lemma}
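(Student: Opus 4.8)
The plan is to imitate the proof of Lemma \ref{stab} verbatim, replacing the continuous maximum principle by the discrete one (Lemma \ref{dmax}). Accordingly, I would begin by introducing the two barrier mesh functions
\[
\vec{\Theta}^{\pm}(t_j)=\max\left\{\parallel\vec\Psi(0)\parallel,\frac{1}{\alpha}\parallel\vec{L}^N\vec\Psi\parallel\right\}\vec{e}\;\pm\;\vec\Psi(t_j),
\]
where $\vec{e}=(1,\dots,1)^{\prime}$ is the unit column vector, and then verify the two hypotheses of Lemma \ref{dmax} for each of them.

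The first hypothesis, $\vec{\Theta}^{\pm}(0)\ge\vec 0$, is immediate: componentwise, $\Theta_i^{\pm}(0)\ge\parallel\vec\Psi(0)\parallel\pm\Psi_i(0)\ge 0$ by definition of the maximum norm. For the second hypothesis I would note that the backward difference operator $D^{-}$ annihilates constant mesh functions, so $\vec{L}^N(c\,\vec{e})=A(t_j)(c\,\vec{e})$, and hence by (\ref{a2}) its $i$-th component equals $c\sum_{k=1}^{n}a_{i,k}(t_j)\ge\alpha c$ whenever $c\ge 0$. Taking $c=\max\{\parallel\vec\Psi(0)\parallel,\alpha^{-1}\parallel\vec{L}^N\vec\Psi\parallel\}$ gives $c\ge\alpha^{-1}\parallel\vec{L}^N\vec\Psi\parallel$, so that
\[
(\vec{L}^N\vec{\Theta}^{\pm}(t_j))_i\;\ge\;\alpha c\;\pm\;(\vec{L}^N\vec\Psi(t_j))_i\;\ge\;\parallel\vec{L}^N\vec\Psi\parallel\;\pm\;(\vec{L}^N\vec\Psi(t_j))_i\;\ge\;0
\]
for all $i$ and all $1\le j\le N$.

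With both hypotheses checked, Lemma \ref{dmax} yields $\vec{\Theta}^{\pm}(t_j)\ge\vec 0$ for $0\le j\le N$, which is precisely $\mp\Psi_i(t_j)\le\max\{\parallel\vec\Psi(0)\parallel,\alpha^{-1}\parallel\vec{L}^N\vec\Psi\parallel\}$ for every component $i$ and every $j$; taking the maximum over $i$ gives the stated bound. I do not anticipate any real obstacle here — the only point requiring a little care is the elementary observation that $\vec{L}^N$ reproduces the row-sum lower bound $\alpha$ on constants exactly as $\vec{L}$ does, since the difference term drops out; everything else is the same barrier-function argument already used twice above.
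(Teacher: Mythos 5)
Your proof is correct and follows exactly the paper's argument: the same barrier mesh functions $\vec{\Theta}^{\pm}$ are introduced and the discrete maximum principle (Lemma \ref{dmax}) is applied. The only difference is that you spell out the verification that $\vec{\Theta}^{\pm}(0)\ge \vec 0$ and $\vec{L}^N\vec{\Theta}^{\pm}(t_j)\ge \vec 0$, which the paper leaves to the reader.
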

\begin{proof} Define the two functions
\[\vec{\Theta}^{\pm}(t)=\max\{||\vec{\Psi}(0)||,\frac{1}{\alpha}||\vec{L^N}\vec{\Psi}||\}\vec{e}\pm
\vec{\Psi}(t)\]where $\vec{e}=(1,\;\dots \;,1)$ is the unit vector.
Using the properties of $A$ it is not hard to verify that
$\vec{\Theta}^{\pm}(0)\geq 0$ and
$\vec{L^N}\vec{\Theta}^{\pm}(t_j)\geq 0$. It follows from Lemma
\ref{dmax} that $\vec{\Theta}^{\pm}(t_j)\geq 0$ for all $0\leq j
\leq N$.
\end{proof}
\section{The local truncation error}
From Lemma \ref{dstab}, we see that in order to bound the error
$\parallel\vec{U}-\vec{u}\parallel$ it suffices to bound
$\vec{L}^N(\vec{U}-\vec{u})$. But this expression satisfies
\[
\vec{L}^N(\vec{U}-\vec{u})=\vec{L}^N(\vec{U})-\vec{L}^N(\vec{u})=
\vec{f}-\vec{L}^N(\vec{u})=\vec{L}(\vec{u})-\vec{L}^N(\vec{u})\]
\[=(\vec{L}-\vec{L}^N)\vec{u} =-E(D^- -D)\vec{u},\] which is the local
truncation of the first derivative. We have
\[E(D^- -D)\vec{u}
=E(D^- -D)\vec{v}+E(D^- -D)\vec{w}
\] and so, by the triangle inequality, \begin{equation}\label{triangleinequality}\parallel \vec{L}^N(\vec{U}-\vec{u})\parallel \leq \parallel
E(D^- -D)\vec{v}\parallel+\parallel E(D^- -D)\vec{w}\parallel.
\end{equation} Thus, we can treat the smooth and singular components
of the local truncation error separately. In view of this we note
that, for any smooth function $\psi$, we have the following two
distinct estimates of the local truncation error of its first
derivative
\begin{equation}\label{lte1}
|(D^- -D)\psi(t_j)|\le2\max_{s\in
I_j}|\psi^{\prime}(s)|\qquad\qquad\;\;
\end{equation}
and
\begin{equation}\label{lte2}
|(D^- -D)\psi(t_j)|\le \frac{\delta_j}{2}\max_{s\in
I_j}|\psi^{\prime\prime}(s)|,
\end{equation}
where $I_j=[t_{j-1},t_j]$.

\section{Error estimate}
We now establish the error estimate by generalizing the approach
based on Shishkin decompositons used in \cite{BAIL2008}. For a
reaction-diffusion boundary value problem in the special case $n=2$
a parameter uniform numerical method was analyzed in \cite{MS} by a
similar technique and in the general case in
\cite{LM} using discrete Green's functions.\\

We estimate the smooth component of the local truncation error in
the following lemma.
\begin{lemma} \label{smootherror} Let $A(t)$ satisfy (\ref{a1}) and (\ref{a2}). Then, for
each $i=1,\; \dots ,\; n$ and $j=1, \;\dots,\; N$, we have
\[ |\eps_{i}(D^- -D)v_i(t_j)|\leq C N^{-1}.\]
\end{lemma}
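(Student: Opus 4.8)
The plan is to bound the smooth component of the local truncation error, $\eps_i(D^- -D)v_i(t_j)$, by combining the two generic truncation-error estimates \eqref{lte1} and \eqref{lte2} with the bounds on $\vec v$ from Lemma \ref{smooth}. First I would apply \eqref{lte2}, which gives
\[
|\eps_i(D^- -D)v_i(t_j)| \le \frac{\delta_j}{2}\,\eps_i\max_{s\in I_j}|v_i^{\prime\prime}(s)|.
\]
By Lemma \ref{smooth} we have $\|\eps_i v_i^{\prime\prime}\| \le C(\|\vec f\|+\|\vec f^{\prime}\|)$, and by \eqref{geom1} we have $\delta_j \le CN^{-1}$ on every mesh $M_{\vec b}$. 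Substituting these two facts directly yields
\[
|\eps_i(D^- -D)v_i(t_j)| \le \frac{1}{2}\,CN^{-1}\cdot C = CN^{-1},
\]
which is exactly the claimed bound.

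The point I expect to need care with is simply that this argument is uniform in all the parameters: the constant $C$ in Lemma \ref{smooth} is independent of $\vec\eps$ and $\vec u(0)$ and $\vec f$, and the mesh bound \eqref{geom1} holds with a constant independent of $N$ and of which Shishkin mesh $M_{\vec b}$ is in use, so the product is genuinely $CN^{-1}$ with $C$ absolute. In contrast to the singular component — where one cannot simply use \eqref{lte2} globally because $\eps_i w_i^{\prime\prime}$ is only bounded by $C\sum_q B_q/\eps_q$, which blows up as $\eps_q \to 0$, forcing a split into regions inside and outside the layers and a delicate interplay with the transition points $t_{i,j}$ and $\sigma_i$ — the smooth component is handled in one line because Lemma \ref{smooth} already gives an $\eps$-uniform bound on $\eps_i v_i^{\prime\prime}$ with no layer-dependent growth. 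So there is essentially no obstacle here; the only thing to watch is to invoke \eqref{lte2} rather than \eqref{lte1}, since \eqref{lte1} together with $\|\vec v^{\prime}\| \le C(\|\vec f\|+\|\vec f^{\prime}\|)$ would only give the non-uniform-in-$N$ bound $C$, not $CN^{-1}$.

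If one wanted to be slightly more explicit, I would remark that the factor $\delta_j$ used above is the largest mesh step on the interval $I_j$ containing $t_j$, and that the uniform estimate \eqref{geom1} absorbs the worst case (the coarse mesh on $(\sigma_n,T]$ with step $\le (T-\sigma_n)\,2/N \le CN^{-1}$), so no case distinction on the location of $t_j$ relative to the $\sigma_i$ is required for the smooth part. Hence the proof reduces to the single chain of inequalities above.

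\begin{proof}
From \eqref{lte2} applied to $\psi = v_i$ we have, for each $i$ and $j$,
\[
|\eps_i(D^- -D)v_i(t_j)| \le \frac{\delta_j}{2}\,\eps_i\max_{s\in I_j}|v_i^{\prime\prime}(s)|
\le \frac{\delta_j}{2}\,\|\eps_i v_i^{\prime\prime}\|.
\]
By Lemma \ref{smooth}, $\|\eps_i v_i^{\prime\prime}\| \le C(\|\vec f\|+\|\vec f^{\prime}\|)$, with $C$ independent of $\vec\eps$, $\vec u(0)$ and $\vec f$. By \eqref{geom1}, $\delta_j \le CN^{-1}$ on any mesh $M_{\vec b}$, with $C$ independent of $N$ and of $\vec b$. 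Combining these,
\[
|\eps_i(D^- -D)v_i(t_j)| \le \frac{1}{2}\,CN^{-1}\cdot C(\|\vec f\|+\|\vec f^{\prime}\|) = CN^{-1},
\]
as required.
\end{proof}
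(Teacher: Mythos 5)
Your proof is correct and follows exactly the paper's own argument: apply \eqref{lte2}, bound $\|\eps_i v_i^{\prime\prime}\|$ via Lemma \ref{smooth}, and use $\delta_j\le CN^{-1}$ from \eqref{geom1}. The extra remarks about parameter-uniformity and why \eqref{lte1} would not suffice are accurate but not needed; absorbing $\|\vec f\|+\|\vec f^{\prime}\|$ into the generic constant $C$ is consistent with the paper's conventions.
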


\begin{proof} Using (\ref{lte2}), Lemma \ref{smooth} and (\ref{geom1}) we
obtain
\[|\eps_{i}(D^{-}-D)v_i(t_j)| \leq C\delta_j
\max_{s \epsilon I_j}|\varepsilon_i v_i^{\prime\prime}(s)| \\
 \leq  C\delta_j \\
  \leq  C N^{-1}
\] as required.
\end{proof}

For the singular component we obtain a similar estimate, but in the
proof we must distinguish between the different types of mesh.  We
need the following preliminary lemmas.
\begin{lemma}\label{est1}  Let $A(t)$ satisfy (\ref{a1}) and (\ref{a2}). Then, for each $i=1,\;
\dots ,\; n$ and $j=1, \;\dots,\; N$, on each mesh $M_{\vec{b}}$, we
have the estimate  \[ |\eps_i(D^- -D)w_i(t_j)| \leq
C\frac{\delta_j}{\eps_1}. \]
\end{lemma}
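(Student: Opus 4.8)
The plan is to bound $|\eps_i(D^- -D)w_i(t_j)|$ uniformly over all mesh intervals $I_j$ by exploiting the two complementary estimates~\eqref{lte1} and~\eqref{lte2} for the truncation error, together with the sharp pointwise bounds on $\vec w$, $\vec w'$ and $\eps_i w_i''$ furnished by Lemma~\ref{singular}. First I would apply~\eqref{lte2}, which gives
\[
|\eps_i(D^- -D)w_i(t_j)| \le \frac{\delta_j}{2}\,\eps_i\max_{s\in I_j}|w_i''(s)|
\le C\,\delta_j\max_{s\in I_j}\sum_{q=1}^n\frac{B_q(s)}{\eps_q},
\]
using the third bound of Lemma~\ref{singular}. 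Since each layer function $B_q$ is decreasing (property (ii) of the $B_i$) and $0<B_q\le 1$ (property (iii)), the maximum over $I_j$ is attained at the left endpoint $t_{j-1}$, and in any case every term $B_q(s)/\eps_q$ is at most $1/\eps_1$ because $\eps_1$ is the smallest parameter and $B_q\le 1$. Hence $\sum_{q=1}^n B_q(s)/\eps_q \le n/\eps_1 \le C/\eps_1$, and the claimed estimate $|\eps_i(D^- -D)w_i(t_j)|\le C\delta_j/\eps_1$ follows immediately.

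The only subtlety is that this argument, as written, uses the crude bound $B_q(s)\le 1$ rather than anything sharper; I expect the point of stating this particular lemma separately is precisely that it is the \emph{uniform} (worst-case) estimate, valid on every interval of every mesh $M_{\vec b}$ without case distinctions, to be used later in tandem with sharper interval-dependent estimates. So no genuine obstacle arises here: the proof is a one-line combination of~\eqref{lte2}, Lemma~\ref{singular}, and the elementary monotonicity/boundedness properties of the $B_i$. If one instead wanted to keep the layer structure visible, one could record the intermediate bound $|\eps_i(D^- -D)w_i(t_j)|\le C\delta_j\sum_{q=1}^n B_q(t_{j-1})/\eps_q$ and only then collapse it to $C\delta_j/\eps_1$; either route is routine.

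One should double-check the edge case $j$ such that $t_{j-1}=0$, where $B_q(0)=1$ for all $q$; there the sum is exactly $\sum_q 1/\eps_q \le n/\eps_1$, still bounded by $C/\eps_1$, so the estimate holds on that interval too. Thus the full statement holds for all $1\le i\le n$ and all $1\le j\le N$ on every $M_{\vec b}$, with $C$ depending only on $n$ and on the constant from Lemma~\ref{singular}.
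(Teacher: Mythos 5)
Your proposal is correct and follows essentially the same route as the paper: apply \eqref{lte2}, invoke the second-derivative bound of Lemma~\ref{singular}, use the monotonicity of the $B_q$ to evaluate at $t_{j-1}$, and then collapse $\sum_q B_q/\eps_q$ to $C/\eps_1$ via $B_q\le 1$ and $\eps_q\ge\eps_1$. No differences worth noting.
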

\begin{proof} From (\ref{lte2}) and Lemma \ref{singular},  we have
\[
\begin{array}{lll} |\eps_{i}(D^{-}-D)w_i(t_j)| & \leq &
C\delta_j
\max_{s \epsilon I_j}|\varepsilon_i w_i^{\prime\prime}(s)| \\
 & \leq & C\delta_j\sum_{q=1}^n \frac{B_{q}(t_{j-1})}{\eps_q} \\
 & \leq &  C\frac{\delta_j}{\eps_1}
\end{array}\] as required. \end{proof}

In what follows we make use of second degree polynomials of the
form
\[p_{i;\theta}=\sum_{k=0}^2
\frac{(t-t_{\theta})^k}{k!}w_{i}^{(k)}(t_{\theta}), \] where
$\theta$ denotes a pair of integers separated by a comma.
%Start of general lemma

\begin{lemma}\label{general} Let $A(t)$ satisfy (\ref{a1}) and (\ref{a2}). Then, for each
$i=1,\; \dots ,\; n$, $j=1, \;\dots,\; N$ and $k=1,\;\;\dots,\;\;
n-1$, on each mesh $M_{\vec{b}}$ with $b_k =1$, there exists a
decomposition \[ w_i=\sum_{m=1}^{k+1}w_{i,m}, \] for which we have
the following estimates for each $m$,  $1 \le m \le k$,
\[|\eps_i
w_{i,m}^{\prime}(t)| \leq CB_m(t), \;\;|\eps_i
w_{i,m}^{\prime\prime}(t)| \leq C\frac{B_{m}(t)}{\eps_m}\] and
\[|\eps_i
w_{i,k+1}^{\prime\prime}(t)| \leq
C\sum_{q=k+1}^{n}\frac{B_{q}(t)}{\eps_q}.\]

Furthermore \[ |\eps_i(D^{-}-D)w_i(t_j)| \leq
C(B_{k}(t_{j-1})+\frac{\delta_j}{\eps_{k+1}}).\]
\end{lemma}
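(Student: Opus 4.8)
The plan is to construct the decomposition $w_i = \sum_{m=1}^{k+1} w_{i,m}$ recursively, peeling off one layer at a time, and then to estimate the truncation error by choosing, on each mesh interval, whichever bound---\eqref{lte1} or \eqref{lte2}---is sharper for each piece. First I would set $w_{i,1}$ to be the solution of an appropriate auxiliary problem capturing only the fastest layer $B_1$, so that $w_{i,1}' = O(B_1/\eps_1 \cdot \eps_1)$... more precisely, arrange that $|\eps_i w_{i,1}'(t)| \le C B_1(t)$ and $|\eps_i w_{i,1}''(t)| \le C B_1(t)/\eps_1$ via the maximum principle (Lemma~\ref{max}) applied with barrier functions built from $B_1$, exactly as in the proof of Lemma~\ref{singular}. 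The remainder $w_i - w_{i,1}$ then has no $B_1$-component in its leading behaviour, so I can repeat the argument to split off $w_{i,2}$ with bounds in terms of $B_2$, and so on, until after $k$ steps the final remainder $w_{i,k+1}$ satisfies only the coarse bound $|\eps_i w_{i,k+1}''(t)| \le C\sum_{q=k+1}^n B_q(t)/\eps_q$, which is just Lemma~\ref{singular} applied to the reduced $(n-k)$-scale system (using the induction machinery already established there).

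For the truncation error bound, I would write $(D^- - D)w_i(t_j) = \sum_{m=1}^{k+1}(D^- - D)w_{i,m}(t_j)$ and treat the terms in two groups. For $m = 1,\dots,k$, apply \eqref{lte1} together with the gradient bound $|\eps_i w_{i,m}'(t)| \le C B_m(t)$ to get $|\eps_i(D^- - D)w_{i,m}(t_j)| \le C \max_{s\in I_j} B_m(s) = C B_m(t_{j-1})$ by monotonicity property (ii) of the layer functions; since $B_m(t_{j-1}) \le B_k(t_{j-1})$ for $m \le k$ by property (i), these all collapse to $C B_k(t_{j-1})$. For the final term $m = k+1$, apply \eqref{lte2} with the second-derivative bound to obtain $|\eps_i(D^- - D)w_{i,k+1}(t_j)| \le C\delta_j \sum_{q=k+1}^n B_q(t_{j-1})/\eps_q \le C\delta_j/\eps_{k+1}$, where the last step uses that $\eps_{k+1} \le \eps_q$ for $q \ge k+1$ and $B_q \le 1$. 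Adding the two contributions gives the claimed bound $C(B_k(t_{j-1}) + \delta_j/\eps_{k+1})$.

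The main obstacle I anticipate is making the peeling-off construction clean: at each stage one must verify that the auxiliary problem defining $w_{i,m}$ is well-posed and that its solution genuinely satisfies the stated $\eps_i$-weighted bounds, which requires choosing the right inhomogeneous term and initial data so that the barrier-function argument with $C B_m \vec e$ goes through (one needs $(\vec L(B_m \vec e))_i = (\sum_j a_{ij} - \alpha\eps_i/\eps_m)B_m$, and this has the wrong sign unless $\eps_i \le \eps_m$, so some care is needed about which components $i$ the bound applies to---but since we only claim the gradient bound for the combination and ultimately only need it inside $(D^- - D)$, the indices work out). A secondary subtlety is ensuring the remainders' derivatives inherit the coarser bounds from Lemma~\ref{singular} applied to a genuinely smaller system; this is where the nested induction structure of Lemma~\ref{singular} is used, and I would invoke it directly rather than re-prove it. Everything else---the two interpolation estimates, the monotonicity and ordering of the $B_q$, and the mesh bound \eqref{geom1}---is routine bookkeeping.
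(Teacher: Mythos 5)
The second half of your argument (splitting the truncation error, applying (\ref{lte1}) with the gradient bounds to the terms $m\le k$ and (\ref{lte2}) with the second-derivative bound to $w_{i,k+1}$) is exactly the paper's argument and is fine. The gap is in the first half: you never actually construct the decomposition, and the strategy you sketch for it does not close. You propose to define each $w_{i,m}$ as the solution of an unspecified auxiliary problem and to prove $|\eps_i w_{i,m}'|\le CB_m$, $|\eps_i w_{i,m}''|\le CB_m/\eps_m$ by a barrier-function argument with $CB_m\vec{e}$. As you yourself note, $(\vec{L}(B_m\vec{e}))_i=(\sum_j a_{ij}-\alpha\eps_i/\eps_m)B_m$ has the wrong sign whenever $\eps_i>\eps_m$, i.e.\ for every component $i>m$ --- and the lemma claims the bounds for \emph{all} $i=1,\dots,n$ and all $m\le k$. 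Saying ``the indices work out'' does not resolve this; the case $i>m$ (e.g.\ $i=n$, $m=1$) is genuinely needed in Lemma \ref{singularerror}. Likewise the bound on $w_{i,k+1}$ is attributed to ``Lemma \ref{singular} applied to the reduced $(n-k)$-scale system'', but in your construction $w_{i,k+1}$ is a leftover remainder, not the singular component of any identified smaller system, so there is nothing concrete to apply that lemma to.

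What is missing is the actual mechanism of the paper, which is entirely elementary and uses no auxiliary problems or maximum principles at this stage: the hypothesis $b_k=1$ gives $\eps_k\le\eps_{k+1}/2$, hence $t_{k,k+1}\in(0,T]$ by (\ref{t4}), and the pieces are defined by gluing second-degree Taylor polynomials of $w_i$ about the crossing points, namely $w_{i,k+1}=p_{i;k,k+1}$ on $[0,t_{k,k+1})$ and $=w_i$ beyond, while $w_{i,m}=p_{i;m-1,m}$ on $[0,t_{m-1,m})$ and $=w_i-\sum_{q>m}w_{i,q}$ beyond, so that $w_{i,m}\equiv 0$ on $[t_{m,m+1},T]$. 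All the claimed bounds then follow from Lemma \ref{singular} applied to $w_i$ itself, combined with the ordering and crossing-point inequalities (\ref{t1})--(\ref{t3}) of Lemma \ref{ts}: on $[t_{m-1,m},t_{m,m+1}]$ the sum $\sum_q B_q(t)/\eps_q$ is dominated by $B_m(t)/\eps_m$; on $[0,t_{m-1,m}]$ the (constant) second derivative of the Taylor polynomial is bounded by $CB_m(t_{m-1,m})/\eps_m\le CB_m(t)/\eps_m$; and the first-derivative bound comes from integrating the second-derivative bound back from $t_{m,m+1}$, where $w_{i,m}'$ vanishes. None of this machinery --- the points $t_{i,j}$, their ordering, or the role of $b_k=1$ in guaranteeing $t_{k,k+1}\le T$ --- appears in your proposal, and without it the decomposition whose existence the lemma asserts has not been produced.
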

\begin{proof} Since $b_k =1$ we have $\eps_{k}\leq \eps_{k+1}/2$,  so
$t_{k,k+1} \in (0,T]$ and we can define the decomposition
\[w_i=\sum_{m=1}^{k+1}w_{i,m},\] where the components of the
decomposition are defined by
\[w_{i,k+1}=\left\{ \begin{array}{ll} p_{i;k,k+1} & {\rm on}\;\;[0,t_{k,k+1})\\
 w_i & {\rm otherwise} \end{array}\right. \]
and for each $m$,  $k \ge m \ge 2$,
\[w_{i,m}=\left\{ \begin{array}{ll} p_{i;m-1,m} & \rm{on} \;\; [0,t_{m-1,m})\\
w_i-\sum_{q=m+1}^{k+1} w_{i,q} & {\rm otherwise}
\end{array}\right. \]
and
\[w_{i,1}=w_i-\sum_{q=2}^{k+1} w_{i,q}\;\; \rm{on} \;\; [0,T]. \]
From the above definitions we note that for each $m$, $1 \leq m
\leq k$,
$w_{i,m}=0 \;\; \rm{on} \;\; [t_{m,m+1},T]$.\\
To establish the bounds on the second derivatives we observe that:

in $[t_{k,k+1},T]$, using Lemma \ref{singular} and $t \geq
t_{k,k+1}$, we obtain
\[|\eps_i w_{i,k+1}^{\prime \prime}(t)| =|\eps_i w_{i}^{\prime \prime}(t)| \leq
C\sum_{q=1}^n \frac{B_q(t)}{\eps_q} \leq C\sum_{q=k+1}^n
\frac{B_q(t)}{\eps_q};\]

in $[0, t_{k,k+1}]$, using Lemma \ref{singular} and $t \leq
t_{k,k+1}$ , we obtain
\[|\eps_i w_{i,k+1}^{\prime \prime}(t)| =|\eps_i w_{i}^{\prime
\prime}(t_{k,k+1})| \leq \sum_{q=1}^{n}
\frac{B_q(t_{k,k+1})}{\eps_q} \leq \sum_{q=k+1}^{n}
\frac{B_q(t_{k,k+1})}{\eps_q} \leq \sum_{q=k+1}^{n}
\frac{B_q(t)}{\eps_q};\]

and for each $m=k, \;\; \dots \;\;,2$, we see that\\

in $[t_{m,m+1},T]$, $w_{i,m}^{\prime \prime}=0;$

in $[t_{m-1,m},t_{m,m+1}]$, using Lemma \ref{singular}, we obtain
\[|\eps_i w_{i,m}^{\prime \prime}(t)| \leq |\eps_i w_{i}^{\prime
\prime}(t)|+\sum_{q=m+1}^{k+1}|\eps_i w_{i,q}^{\prime \prime}(t)|
\leq C\sum_{q=1}^n \frac{B_q(t)}{\eps_q} \leq
C\frac{B_m(t)}{\eps_m};\]

in $[0, t_{m-1,m}]$, using Lemma \ref{singular} and $t \leq
t_{m-1,m}$, we obtain
\[|\eps_i w_{i,m}^{\prime \prime}(t)| =|\eps_i w_{i}^{\prime
\prime}(t_{m-1,m})| \leq C\sum_{q=1}^n \frac{B_q(t_{m-1,m})}{\eps_q}
\leq C\frac{B_m(t_{m-1,m})}{\eps_m} \leq C\frac{B_m(t)}{\eps_m};
\]

in $[t_{1,2},T],\;\; w_{i,1}^{\prime \prime}=0;$

in $[0, t_{1,2}]$, using Lemma \ref{singular}, \[|\eps_i
w_{i,1}^{\prime \prime}(t)| \leq |\eps_i w_{i}^{\prime
\prime}(t)|+\sum_{q=2}^{k+1}|\eps_i w_{i,q}^{\prime \prime}(t)|\leq
C\sum_{q=1}^n \frac{B_q(t)}{\eps_q} \leq C\frac{B_1(t)}{\eps_1}.\]

For the bounds on the first derivatives we observe that for each
$m$, $1 \leq m \leq k $ :

in $[t_{m,m+1},T],\;\; w_{i,m}^{\prime}=0;$

in $[0, t_{m,m+1}]\;\; \int_t^{t_{m,m+1}}\eps_i w_{i,m}^{\prime
\prime}(s)ds= \eps_i w_{i,m}^{\prime}(t_{m,m+1})- \eps_i
w_{i,m}^{\prime}(t)= -\eps_i w_{i,m}^{\prime}(t)$ \\
and so
\[|\eps_i w_{i,m}^{\prime}(t)| \leq \int_t^{t_{m,m+1}}|\eps_i
w_{i,m}^{\prime \prime}(s)|ds \leq
\frac{C}{\eps_m}\int_{t}^{t_{m,m+1}} B_m(s)ds \leq CB_m(t).\]
Finally, since \[|\eps_i(D^{-}-D)w_i(t_j)| \leq
\sum_{m=1}^{k}|\eps_i(D^{-}-D)w_{i,m}(t_j)|+
|\eps_i(D^{-}-D)w_{i,k+1}(t_j)|,\] using  (\ref{lte2}) on the last
term and (\ref{lte1}) on all other terms on the right hand side, we
obtain
\[|\eps_i(D^{-}-D)w_i(t_j)| \leq C(\sum_{m=1}^{k}\max_{s \in
I_j}|\eps_iw_{i,m}^\prime(s)| +\delta_j\max_{s \in
I_j}|\eps_iw_{i,k+1}^{\prime \prime}(s)|).\]  The desired result
follows by applying the bounds on the derivatives in the first part
of this lemma. \end{proof}

%End of general lemma
%
%
\begin{lemma}\label{est3} Let $A(t)$ satisfy (\ref{a1}) and (\ref{a2}). Then, for each $i=1,\;
\dots ,\; n$ and $j=1, \;\dots,\; N$, on each mesh $M_{\vec{b}}$,
 we have the estimate \[
|\eps_i(D^{-}-D)w_i(t_j)| \leq CB_n(t_{j-1}).\]
\end{lemma}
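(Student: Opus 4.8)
The plan is to bound the local truncation error $|\eps_i(D^- -D)w_i(t_j)|$ by $CB_n(t_{j-1})$ using the two elementary estimates \eqref{lte1} and \eqref{lte2} together with the bounds on $\vec w$ and its derivatives from Lemma \ref{singular}. The key observation is that the factor $\delta_j/\eps_q$ appearing when we use \eqref{lte2} can always be controlled by $N^{-1}\ln N$ on a fine part of the mesh, while on a coarse part the layer function $B_n$ is already exponentially small. Since $B_q(t_{j-1}) \le B_n(t_{j-1})$ for all $q$ by property (i) of the layer functions, it suffices to show $\delta_j/\eps_q \le C$ whenever $t_{j-1}$ lies in a region where $B_n$ is not exponentially small, and to show $B_n(t_{j-1}) \le CN^{-1}$ on the coarse region.

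First I would use \eqref{lte1} and the bound $|w_i'(s)| \le C\sum_{q=i}^n B_q(s)/\eps_q \le CB_n(s)/\eps_i \le CB_n(s)/\eps_1$ from Lemma \ref{singular}, which gives the crude estimate $|\eps_i(D^- -D)w_i(t_j)| \le C\max_{s\in I_j} B_n(s) = CB_n(t_{j-1})$ directly on \emph{any} interval $I_j$, since $B_n$ is decreasing. Wait --- this already proves the lemma unconditionally. Indeed, from Lemma \ref{singular} we have $|w_i'(s)| \le C\sum_{q=i}^n B_q(s)/\eps_q$; using $B_q(s) \le B_n(s)$ and $\eps_q \ge \eps_i$ this is at most $C(n-i+1)B_n(s)/\eps_i$. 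Hence $|\eps_i w_i'(s)| \le CB_n(s)$, and by \eqref{lte1},
\[
|\eps_i(D^- -D)w_i(t_j)| \le 2\max_{s\in I_j}|\eps_i w_i'(s)| \le C\max_{s\in I_j}B_n(s) = CB_n(t_{j-1}),
\]
using property (ii) that $B_n$ is monotonically decreasing, so its maximum over $I_j=[t_{j-1},t_j]$ is attained at the left endpoint $t_{j-1}$. That is the whole argument.

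So the proof is genuinely short: the estimate \eqref{lte1} combined with the derivative bound on $\vec w$ collapses everything to the monotonicity of $B_n$. The only mild subtlety --- the ``main obstacle,'' such as it is --- is correctly tracking the $\eps$-factors: one must use that $\eps_i/\eps_q \le 1$ for $q \ge i$ so that $\eps_i \cdot (B_q(s)/\eps_q) \le B_q(s) \le B_n(s)$, and that the number of terms in the sum is bounded by $n$, a constant. No case distinction on the mesh type $M_{\vec b}$ is actually needed for this particular estimate, in contrast to Lemma \ref{general}; this lemma is the ``coarse'' companion estimate that will be paired with Lemmas \ref{est1} and \ref{general} in the final synthesis.

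\begin{proof}
By Lemma \ref{singular}, for each $i$ and all $s$,
\[
|\eps_i w_i^{\prime}(s)| \le C\,\eps_i\sum_{q=i}^{n}\frac{B_q(s)}{\eps_q}
 \le C\sum_{q=i}^{n} B_q(s) \le C\,B_n(s),
\]
where we used $\eps_i \le \eps_q$ for $q \ge i$ and then property (i) of the layer functions, $B_q(s) \le B_n(s)$, together with the fact that the sum has at most $n$ terms. Applying \eqref{lte1} with $\psi = w_i$ gives
\[
|\eps_i(D^- -D)w_i(t_j)| \le 2\max_{s\in I_j}|\eps_i w_i^{\prime}(s)|
 \le C\max_{s\in I_j} B_n(s).
\]
Since $I_j = [t_{j-1},t_j]$ and, by property (ii), $B_n$ is monotonically decreasing, the maximum is attained at $s = t_{j-1}$, so $\max_{s\in I_j}B_n(s) = B_n(t_{j-1})$. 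Hence $|\eps_i(D^- -D)w_i(t_j)| \le CB_n(t_{j-1})$, as required.
\end{proof}
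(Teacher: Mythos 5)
Your proof is correct and follows essentially the same route as the paper: apply (\ref{lte1}), invoke the bound $|w_i'(s)|\le C\sum_{q=i}^n B_q(s)/\eps_q$ from Lemma \ref{singular}, use $\eps_i\le\eps_q$ for $q\ge i$ together with $B_q\le B_n$, and conclude via the monotonic decrease of $B_n$ that the maximum over $I_j$ is $B_n(t_{j-1})$. No further comment is needed.
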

\begin{proof} From (\ref{lte1}) and Lemma \ref{singular}, for each $i=1,\;
\dots ,\; n$ and $j=1, \;\dots,\; N$, we have
\[
\begin{array}{lll} |\eps_{i}(D^{-}-D)w_i(t_j)| & \leq &
C\max_{s \epsilon I_j}|\varepsilon_i w_i^{\prime}(s)| \\
 & \leq & C\eps_{i}\sum_{q=i}^n \frac{B_{q}(t_{j-1})}{\eps_q}\\
 & \leq &  CB_n(t_{j-1})
\end{array}\] as required.
\end{proof}

Using the above preliminary lemmas on appropriate subintervals we
obtain the desired estimate of the singular component of the local
truncation error in the following lemma.
\begin{lemma}\label{singularerror} Let $A(t)$ satisfy (\ref{a1}) and (\ref{a2}). Then, for each $i=1,\;
\dots ,\; n$ and $j=1, \;\dots,\; N$, we have the estimate
\[ |\eps_{i}(D^- -D)w_i(t_j)| \leq CN^{-1}\ln N. \]
\end{lemma}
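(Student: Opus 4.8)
The plan is to select, for each mesh interval $I_j=[t_{j-1},t_j]$, whichever of the three bounds already proved for the singular part of the truncation error is sharpest there: Lemma~\ref{est1} gives $|\eps_i(D^--D)w_i(t_j)|\le C\delta_j/\eps_1$ on every mesh, Lemma~\ref{est3} gives $|\eps_i(D^--D)w_i(t_j)|\le CB_n(t_{j-1})$ on every mesh, and Lemma~\ref{general} gives $|\eps_i(D^--D)w_i(t_j)|\le C\bigl(B_k(t_{j-1})+\delta_j/\eps_{k+1}\bigr)$ whenever $b_k=1$. Which one to use is dictated by the position of $I_j$ among the bands $[0,\sigma_1],(\sigma_1,\sigma_2],\dots,(\sigma_{n-1},\sigma_n],(\sigma_n,T]$ and by which transition points are active, i.e.\ satisfy $\sigma_k=\eps_k\ln N/\alpha$ (equivalently $b_k=1$) as opposed to collapsed, $\sigma_k=\sigma_{k+1}/2$ (that is, $b_k=0$).

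Two elementary mesh facts drive the estimate. First, on the band $(\sigma_{m-1},\sigma_m]$ (with $\sigma_0=0$) the step satisfies $\delta_j\le C\sigma_m/N$ for $1\le m\le n$, and on $(\sigma_n,T]$ one has $\delta_j\le CN^{-1}$ (here $C$ may depend on the fixed quantities $n$ and $T$). Second, $\sigma_k\le\eps_k\ln N/\alpha$ always, so $\eps_k\ge\alpha\sigma_k/\ln N$; and if $b_p=b_{p+1}=\dots=b_{m-1}=0$ then the transition points collapse geometrically, $\sigma_p=\sigma_m/2^{m-p}$. Put together, any ratio $\delta_j/\eps_\ell$ in which $\ell$ lies at or below the band index of $I_j$ and the intervening $b$'s vanish is bounded by $CN^{-1}\ln N$.

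The argument then runs as follows. If $I_j\subseteq(\sigma_n,T]$ and $\sigma_n=\eps_n\ln N/\alpha$, then $B_n(t_{j-1})\le B_n(\sigma_n)=e^{-\ln N}=N^{-1}$ and Lemma~\ref{est3} gives the bound at once. In every other situation — $I_j$ in one of the bands $[0,\sigma_1],\dots,(\sigma_{n-1},\sigma_n]$, or $I_j\subseteq(\sigma_n,T]$ with $\sigma_n=T/2$ — I split on whether there is an active transition point to the left of $t_{j-1}$. If there exists $k$ with $b_k=1$ and $\sigma_k\le t_{j-1}$, let $q$ be the largest such index and apply Lemma~\ref{general} with $k=q$: the layer term is immediate, $B_q(t_{j-1})\le B_q(\sigma_q)=N^{-1}$, while $q+1$ lies at or below the band index of $I_j$ and the indices between are collapsed, so $\delta_j/\eps_{q+1}\le CN^{-1}\ln N$ by the mesh facts above; hence $|\eps_i(D^--D)w_i(t_j)|\le CN^{-1}\ln N$. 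If no such $k$ exists, then $b_1=\dots=b_{m-1}=0$ where $(\sigma_{m-1},\sigma_m]$ is the band of $I_j$ (and if $I_j\subseteq(\sigma_n,T]$ with $\sigma_n=T/2$ then all $b_k=0$); the transition points collapse down to $\sigma_1$, so $\eps_1\ge\alpha\sigma_1/\ln N$ is comparable to the local mesh scale, and Lemma~\ref{est1} yields $C\delta_j/\eps_1\le CN^{-1}\ln N$. This exhausts all $i$ and $j$.

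The step I expect to be the main obstacle is the book-keeping in the first case: one must verify, uniformly over the $2^n$ meshes $M_{\vec b}$, that $q+1$ really does sit at or before the band of $I_j$ and that the geometric collapse of the intervening $\sigma$'s makes $\delta_j/\eps_{q+1}$ come out as $CN^{-1}\ln N$ with a constant independent of $N$ and of all the $\eps_i$ (it is permitted to grow with $n$, through a factor like $2^{n}$, since $n$ is fixed). The degenerate configurations — all $b_k=0$, or $\sigma_n=T/2$ in place of $\eps_n\ln N/\alpha$ — need to be checked by hand, but each reduces to one of the two mechanisms just described: smallness of a layer function at an active transition point, or $\eps_1$ bounded below in terms of the local mesh width.
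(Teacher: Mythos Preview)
Your proposal is correct and follows essentially the same strategy as the paper: both arguments select among Lemmas~\ref{est1}, \ref{general}, and \ref{est3} according to the band containing $I_j$ and the largest index $q$ with $b_q=1$ and $\sigma_q\le t_{j-1}$, using the collapse $\sigma_p=\sigma_m/2^{m-p}$ when $b_p=\cdots=b_{m-1}=0$ together with $\sigma_k\le\eps_k\ln N/\alpha$ to control $\delta_j/\eps_{q+1}$. Your organization around the single parameter $q$ is a clean repackaging of the paper's band-by-band enumeration of the $2^n$ mesh types, but the underlying mechanism is identical.
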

\begin{proof}  We consider each subinterval separately.\\
\noindent In the subinterval $(0,\sigma_1]$ we have $\delta_j \leq
CN^{-1}\sigma_1$. On any mesh $M_{\vec{b}}$, using Lemma
\ref{est1}, we get $|\eps_i(D^{-}-D)w_i(t_j)| \leq
CN^{-1}\frac{\sigma_1}{\eps_1}\leq CN^{-1}\ln N $.
\noindent In
the subinterval $(\sigma_1,\sigma_2]$ we have $\delta_j \leq
CN^{-1}\sigma_2.$

On any mesh $M_{\vec{b}}$ with $b_1=0$, we have
$\sigma_2=2\sigma_1$. Using Lemma \ref{est1} we get $|\eps_{i}(D^-
-D)w_i(t_j)|\leq CN^{-1}\frac{\sigma_2}{\eps_1}\leq
CN^{-1}\frac{\sigma_1}{\eps_1}\leq CN^{-1}\ln N$.

On any mesh $M_{\vec{b}}$ with $b_1=1$, we have
$\sigma_1=\frac{\eps_1}{\alpha}\ln N$. Using Lemma \ref{general}
with $k=1$ we get $|\eps_{i}(D^- -D)w_i(t_j)|\leq
C(B_1(\sigma_1)+N^{-1}\frac{\sigma_2}{\eps_2})\leq CN^{-1}\ln N$.

\noindent In a general subinterval $(\sigma_m,\sigma_{m+1}]\; 2 \leq
m \leq n-1,$ we have $\delta_j \leq CN^{-1}\sigma_{m+1}$.

On any mesh $M_{\vec{b}}$ with $b_q=0, \; q=1, \;\dots \;,m$, we
have $\sigma_{m+1}=C\sigma_1$. Using Lemma \ref{est1} we get
$|\eps_{i}(D^- -D)w_i(t_j)|\leq
CN^{-1}\frac{\sigma_{m+1}}{\eps_1}\leq
CN^{-1}\frac{\sigma_1}{\eps_1}\leq CN^{-1}\ln N$.

On any mesh $M_{\vec{b}}$ with $ b_1=1, b_q=0, \; q=2, \;\dots
\;,m$, we have $\sigma_1=\frac{\eps_1}{\alpha}\ln
N,\;\sigma_{m+1}=C\sigma_2$. Using Lemma \ref{general} with $k=1$ we
get $|\eps_{i}(D^- -D)w_i(t_j)|\leq
C(B_1(\sigma_m)+N^{-1}\frac{\sigma_{m+1}}{\eps_2})\leq
C(B_1(\sigma_1)+N^{-1}\frac{\sigma_2}{\eps_2})\leq CN^{-1}\ln N$.

On any mesh $M_{\vec{b}}$ with $ b_k=1,\; b_q=0, \; q=k+1, \;\dots
\;,m$, we have $\sigma_k=\frac{\eps_k}{\alpha}\ln
N,\;\sigma_{m+1}=C\sigma_{k+1}$. Using Lemma \ref{general} with
general $k$ we get $|\eps_{i}(D^- -D)w_i(t_j)|\leq
C(B_k(\sigma_m)+N^{-1}\frac{\sigma_{m+1}}{\eps_{k+1}})\leq
C(B_k(\sigma_k)+N^{-1}\frac{\sigma_{k+1}}{\eps_{k+1}})\leq
CN^{-1}\ln N$.

On any mesh $M_{\vec{b}}$ with $ b_m=1$, we have
$\sigma_m=\frac{\eps_m}{\alpha}\ln N$. Using Lemma \ref{est3} we get
$|\eps_{i}(D^- -D)w_i(t_j)|\leq CN^{-1}B_m(\sigma_m)\leq CN^{-1}\ln
N$.

% Finally
\noindent In the subinterval $(\sigma_n,T]$ we have $\delta_j \leq
CN^{-1}$.

On any mesh $M_{\vec{b}}$ with $b_q=0, \; q=1, \;\dots \;,n$, we
have $1/\eps_1 \leq C\ln N$. Using Lemma \ref{est1} we get
$|\eps_{i}(D^- -D)w_i(t_j)|\leq CN^{-1}/\eps_1 \leq
 CN^{-1}\ln N$.

On any mesh $M_{\vec{b}}$ with $ b_1=1,\; b_q=0, \; q=2, \;\dots
\;,n$, we have $\sigma_1=\frac{\eps_1}{\alpha}\ln N,\;1/ \eps_2 \leq
C\ln N$. Using Lemma \ref{general} with $k=1$ we get $|\eps_{i}(D^-
-D)w_i(t_j)|\leq C(B_1(\sigma_n)+N^{-1}\ /\eps_2)\leq
C(B_1(\sigma_1)+N^{-1}\ /\eps_2)\leq CN^{-1}\ln N$.

On any mesh $M_{\vec{b}}$ with $ b_k=1,\; b_q=0, \; q=k+1, \;\dots
\;,n,\; 2 \leq k \leq n-1$, we have
$\sigma_k=\frac{\eps_k}{\alpha}\ln N,\;1/\eps_{k+1} \leq C\ln N$.
Using Lemma \ref{general} with general $k$ we get $|\eps_{i}(D^-
-D)w_i(t_j)|\leq C(B_k(\sigma_n)+N^{-1}/ \eps_{k+1})\leq
C(B_k(\sigma_k)+N^{-1}/ \eps_{k+1})\leq CN^{-1}\ln N$.

On any mesh $M_{\vec{b}}$ with $ b_n=1$, we have
$\sigma_n=\frac{\eps_n}{\alpha}\ln N$. Using Lemma \ref{est3} we
get $|\eps_{i}(D^- -D)w_i(t_j)|\leq CN^{-1}B_n(\sigma_n)\leq
CN^{-1}\ln N$.

It is not hard to verify that on each of the $n+1$ subintervals we
have obtained the required estimate for all of the $2^n$ possible
meshes. \end{proof}

 Let $\vec u$ denote the exact solution of
(\ref{IVP}) and $\vec U$ the discrete solution. Then, the main
result of this paper is the following $\eps$-uniform error
estimate
\begin{theorem} Let $A(t)$ satisfy (\ref{a1}) and (\ref{a2}). Then
 there exists a constant $C$ such that \[\parallel\vec{U}-\vec{u}\parallel \leq CN^{-1}\ln
 N,
\] for all $N > 1$
\end{theorem}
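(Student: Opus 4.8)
The plan is to combine the discrete stability estimate of Lemma~\ref{dstab} with the bounds already obtained for the local truncation error. Recall that, as computed at the beginning of Section~4, the error $\vec U - \vec u$ satisfies
\[
\vec L^N(\vec U - \vec u) = -E(D^- - D)\vec u = -E(D^- - D)\vec v - E(D^- - D)\vec w,
\]
so that by the triangle inequality \eqref{triangleinequality} we have
\[
\parallel \vec L^N(\vec U - \vec u)\parallel \;\le\; \parallel E(D^- - D)\vec v\parallel + \parallel E(D^- - D)\vec w\parallel.
\]
Since $(\vec U - \vec u)(0) = \vec 0$, Lemma~\ref{dstab} applied to the mesh function $\vec\Psi = \vec U - \vec u$ gives
\[
\parallel \vec U - \vec u\parallel \;\le\; \frac{1}{\alpha}\,\parallel \vec L^N(\vec U - \vec u)\parallel \;\le\; \frac{1}{\alpha}\Big(\parallel E(D^- - D)\vec v\parallel + \parallel E(D^- - D)\vec w\parallel\Big).
\]

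The two terms on the right are precisely what the preceding lemmas control, uniformly in $\vec\eps$. First I would invoke Lemma~\ref{smootherror}, which gives $|\eps_i(D^- - D)v_i(t_j)| \le C N^{-1}$ for every component $i$ and every mesh point $t_j$; taking the maximum over $i$ and $j$ yields $\parallel E(D^- - D)\vec v\parallel \le C N^{-1}$. Next I would invoke Lemma~\ref{singularerror}, which gives $|\eps_i(D^- - D)w_i(t_j)| \le C N^{-1}\ln N$ for every $i$ and $j$ on every one of the $2^n$ possible Shishkin meshes $M_{\vec b}$; taking the maximum gives $\parallel E(D^- - D)\vec w\parallel \le C N^{-1}\ln N$. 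Substituting these two bounds into the stability inequality above, and absorbing $1/\alpha$ and the $N^{-1}$ term into the generic constant (using $N^{-1} \le N^{-1}\ln N$ for $N>1$), produces $\parallel \vec U - \vec u\parallel \le C N^{-1}\ln N$ for all $N>1$, which is the claim.

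In truth there is no serious obstacle left at this stage: all the analytical work has been front-loaded into the lemmas of Sections~2--5. The only point requiring a word of care is that the bound on the singular part must hold \emph{uniformly over all the meshes $M_{\vec b}$}, since the transition parameters $\sigma_i$ and hence the mesh itself depend on the (unknown) relative sizes of the $\eps_i$; but this uniformity is exactly what Lemma~\ref{singularerror} asserts, its proof having exhausted all $2^n$ cases subinterval by subinterval. One should also note that the constant $C$ produced this way inherits independence from $\vec\eps$ and from $N$ from the constants in Lemmas~\ref{smootherror} and~\ref{singularerror} together with the fixed quantity $\alpha$, so the resulting estimate is genuinely parameter-uniform, as required.
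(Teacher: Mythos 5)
Your proposal is correct and follows exactly the route the paper takes: the identity $\vec{L}^N(\vec{U}-\vec{u})=-E(D^--D)\vec{u}$ and the triangle inequality (\ref{triangleinequality}) reduce the problem to the truncation-error bounds of Lemmas \ref{smootherror} and \ref{singularerror}, and the discrete stability Lemma \ref{dstab} (with $\vec{U}(0)-\vec{u}(0)=\vec{0}$) then yields the claim. The paper states this in one line; your write-up simply makes the same argument explicit, including the correct observation that the parameter-uniformity over all $2^n$ meshes is already carried by Lemma \ref{singularerror}.
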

\begin{proof} This follows immediately by applying Lemmas
\ref{smootherror} and \ref{singularerror} to
(\ref{triangleinequality}) and using Lemma \ref{dstab}.  \end{proof}
\section*{Acknowledgments}
The first author acknowledges the support of the UGC, New Delhi,
India  under the Minor Research Project-X Plan period.


\begin{thebibliography}{10}

\bibitem{Ath} A. C. Athanasios
"Approximation of Large-Scale Dynamical Systems", SIAM,
Philadelphia (2005).

\bibitem{FHMORS} P.A. Farrell, A. Hegarty, J. J. H. Miller, E. O'Riordan, G. I.
Shishkin, \textit{Robust Computational Techniques for Boundary
Layers\/}, Applied Mathematics \& Mathematical Computation (Eds.
R. J. Knops \& K. W. Morton), Chapman \& Hall/CRC Press (2000).

\bibitem{BAIL2008} S. Valarmathi, J. J. H. Miller \textit{A Parameter--Uniform Finite Difference Method for a Singularly
Perturbed Initial Value Problem: a Special Case}. Proceedings of
the International Conference on Boundary and Interior Layers,
 BAIL 2008, Limerick, Ireland  Springer (to appear).

\bibitem{HV} S.Hemavathi, S. Valarmathi, \textit{A parameter uniform numerical method for a system of singularly
 perturbed ordinary differential equations\/}, Proceedings of the International Conference on Boundary and Interior Layers,
 BAIL 2006, Goettingen  (2006).

\bibitem{LM} T. Linss, N. Madden \textit{Layer-adapted
 meshes for linear system of coupled singularly perturbed
reaction-diffusion problems} IMA J. Num. Anal., (to appear).

\bibitem{MS} N. Madden, M. Stynes, \textit{A uniformly convergent numerical method
for a coupled system of two singularly perturbed
reaction-diffusion problems} IMA J. Num. Anal., 23, 627-644
(2003).

\bibitem{MORS} J. J. H. Miller, E. O'Riordan, G.I.
Shishkin, \textit{Fitted Numerical Methods for Singular
Perturbation Problems\/}, World Scientific Publishing Co.,
Singapore, New Jersey, London, Hong Kong (1996).

\end{thebibliography}
\end{document}